\newtheorem{theorem}{Theorem}
\newtheorem{lemma}[theorem]{Theorem}
\newtheorem{remark}[theorem]{Remark}
\numberwithin{equation}{section}
\newcommand{\ang}[1]{\langle #1 \rangle}
\newcommand\enorm[1]{|\!|\!| #1|\!|\!|}	
\newcommand{\ngrad}{\bm\partial_{\bm n}}
\newcommand{\dK}{\partial K}
\newcommand{\rd}{\partial}
\newcommand{\R}{{\mathbb R}}
\newcommand{\Proj}{{\sf P}}
\newcommand{\T}{\mathcal{T}}
\newcommand{\dT}{\partial\mathcal{T}}
\newcommand{\E}{\mathcal{E}}
\newcommand{\diam}{{\rm diam}}
\newcommand{\sumK}{\sum_{K \in \T_h}}
\newcommand{\I}{{\sf I}}
\newcommand{\dive}{\mathrm{div}}
\newcommand{\vech}[1]{\vec{\widehat #1}}
\renewcommand{\vec}{\bm}
\newcommand{\tvec}[1]{\widetilde{\vec #1}}
\newcommand{\jump}[1]{[\![#1]\!]}
\newcommand{\V}{\vec V_h^{k+1}}
\newcommand{\hV}{\vech V_h^{k}}
\newcommand{\tV}{\tvec V_h^{k+1}}
\newcommand{\jumpuh}{|(\vec u_h^\tau, \vech u_h^\tau)|_{\rm j}}
\title
{Analysis of a reduced-order HDG method for the Stokes equations
}
\author{Issei Oikawa}    
\date{}
\begin{document} 
\maketitle

\begin{abstract} 
In this paper, we  analyze 
 a hybridized discontinuous Galerkin(HDG) method
  with reduced stabilization for the Stokes equations.
 The reduced stabilization enables us to reduce
 the number of  facet unknowns and improve the computational efficiency of the method. 
 We provide optimal error estimates in an energy and $L^2$ norms.
 It is shown that the reduced method with the lowest-order approximation
  is closely related to the nonconforming Crouzeix-Raviart finite element method.
 We also prove that
  the solution of the reduced method 
  converges to the nonconforming Gauss-Legendre finite element
   solution as a stabilization parameter $\tau$ tends to infinity 
   and that  the convergence rate is $O(\tau^{-1})$.
   
\end{abstract}

\keywords{
discontinuous Galerkin method \and
hybridization \and
Gauss-Legendre element \and
Stokes equations
}


\section{Introduction}
 The aim of this paper is to propose and analyze a reduced-order hybridized discontinuous Galerkin(HDG) method for the Stokes equations with no-slip boundary condition:   
 \begin{equation} \label{stokes}
 \begin{aligned}
  -\Delta \vec u + \nabla p &= \vec f  \text{ in } \Omega, \\
  \dive \vec u & = 0      \text{ in } \Omega,\\ 
  \vec u &= \vec 0  \text{ on } \rd \Omega,
 \end{aligned}
 \end{equation}
 where $\Omega \subset \R^d (d=2,3)$ is a convex polygonal or polyhedral domain and $\vec f \in \vec L^2(\Omega) := [L^2(\Omega)]^d$ is a given function.
 For the Stokes problem, various HDG methods were already proposed and studied
 \cite{CCS2006,CoGo2009,CNP2010,NPC2010,CGNPS2011,CoCu2012a,CoCu2012b,CoSa2014,Lehrenfeld2010,EgWa2013}.
 We also refer to \cite{CoShi2014} for an overview.
 The method we investigate in this paper is the HDG-IP method proposed by Egger and Waluga in \cite{Lehrenfeld2010}.
 The HDG-IP method is based on the gradient-velocity-pressure formulation of the Stokes equations.
 We remark that the HDG method of the local discontinuous Galerkin type\cite{NPC2010,CGNPS2011} is
 close to the HDG-IP with a slight difference of a numerical flux. 
         
 A reduced stabilization was  introduced to the DG methods and was analyzed for elliptic problems\cite{BuSt2008,BuSt2009}.
 In \cite{BCJ2012}, Becker et al. studied the reduced-order DG method for the Stokes equations and 
 analyzed the limit case as a stabilization parameter tends to infinity.
  In \cite{Lehrenfeld2010}, Lehrenfeld proposed a reduced-order HDG method for the Poisson equation and the Stokes equations. 
   Lehrenfeld also remarked that the convergence rate of the method is optimal,  however, error analysis was not presented. 
   In \cite{OikawaToAppear}, for the Poisson problem, the author
   provided the optimal error estimates and showed the  reduced-order HDG method with the lowest-order approximation
   is closely related to the nonconforming Crouzeix-Raviart finite element method.  
  Recently, Qiu and Shi analyzed the reduced methods 
  for linear elasticity problems\cite{QiuShi2015} and convection-diffusion equations\cite{QiuShi2014Submitted}.  

   The reduced-order HDG method uses polynomials of degree $k+1$ and $k$ to
   approximate element and hybrid unknowns, respectively, whereas
   the standard HDG method uses polynomials of degree $k$ for both unknowns.
    Although both the methods have
    the same number of globally coupled degrees of freedom,
   the reduced method can provide 
   higher order convergence than the standard method.
   This is the main advantage of the reduced-order HDG method.
   For the Stokes problem,  when we use polygonal or polyhedral elements, the reduced method
    is indeed better than the standard method in term of convergence orders. 
   The standard method using polynomials of degree $k$  
   for all unknowns  obtains the suboptimal convergence; the orders are  $k+1$ for velocity  without postprocessing and $k+1/2$ for the gradient and pressure, according to \cite{CoShi2014}.  
   In contrast, the reduced method uses polynomials of degree $k+1$ for velocity and polynomials of degree $k$ for the hybrid part of velocity and pressure and can achieve  the optimal order convergence.

 In this paper, we provide optimal error estimates of the reduced method
  for the Stokes problem.   
 Since we need to use a weaker energy norm
 in our analysis, it is necessary to modify the error analysis of the standard method.
 We note that the main difficulties can be overcome by the techniques 
   used in the author's previous work \cite{OikawaToAppear}
 and the discrete inf-sup condition proved by Egger and Waluga \cite{EgWa2013}.
 We also show a relation between the reduced method and 
 the Gauss-Legendre element(see \cite{BBF2013,StBa2006} for example).
 It is proved that the hybrid part of velocity and the pressure of the reduced-order HDG method with the lowest-order approximation 
 coincides with those of the nonconforming Crouzeix-Raviart finite element solution. 
 In the limit case as the stabilization parameter $\tau$ 
 tends to infinity, the solution of the reduced method 
 converges to that of the nonconforming Gauss-Legendre method.
 The convergence rate is estimated to be $O(\tau^{-1})$.
 This result is inspired by \cite[Theorem 3]{BCJ2012}, 
 however, our proof is completely different and novel.

 The rest of this paper is organized as follows. 
 Section 2 is devoted to the preliminaries.
 In Section 3, we introduce a reduced stabilization and present a reduced HDG method.
 In Section 4, we provide a priori error estimates in an energy and $L^2$ norms.
 In Section 5, some relations between the nonconforming Gauss-Legendre 
  finite element method and the reduced method are shown.
 In Section 6, numerical results are presented to confirm our theoretical results.

\section{Preliminaries}
 \subsection{Meshes and function spaces}
Let $\{\T_h\}_h$ be a family of shape-regular triangulations of $\Omega$ and
 define $\Gamma_h = \bigcup_{K \in \T_h} \dK$.
 Let $\E_h$ be the set of all edges in $\T_h$.
 The mesh size of $\T_h$ is denoted by $h$, namely $h := \max_{K \in \T_h} h_K$,
  where $h_K = \diam K$. The length of an edge $e \in \E_h$ is denoted by $h_e$.

 We use the usual Lebesgue and Sobolev spaces; $L^2(\Omega)$, $L^2(\Gamma_h)$ and $H^m(\Omega)$,
  and also $L^2_0(\Omega) = \{ q \in L^2(\Omega) : \int_\Omega q dx = 0\}$.
We introduce piecewise Sobolev spaces
$
  H^m(\T_h) = \{ v \in L^2(\Omega) : v|_K \in H^m(K)\ \forall K \in \T_h\}.
$
 For vector-valued function spaces, we write them in bold,
  such as $\vec L^2(\Omega) = [L^2(\Omega)]^d$ and $\vec H^m(\Omega) = [H^m(\Omega)]^d$.
 The usual $L^2$ inner product is denoted by
 $(\cdot, \cdot)_\Omega$.
  Let us define the piecewise inner products by 
  \[ 
  (u, v)_{\T_h}
  = \sumK \int_K u v dx, \quad \ang{u,v}_{\dT_h} = \sumK \int_{\dK} uv ds.
  \]
  Let $P_k(\T_h)$ and $P_k(\E_h)$ denote the space of \emph{element-wise}
   and \emph{edge-wise} polynomials of degree $k$, respectively.
  We employ $\vec V_h^{k+1} = \vec P_{k+1}(\T_h)$,
  $\vech V_h^{k} = \vec P_{k}(\E_h) \cap 
   \{ \vech v \in \vec L^2(\Gamma_h) : \vech v = \vec 0$ 
   on  $\partial\Omega \}$
  and $Q_h^{k} = P_{k}(\T_h) \cap L^2_0(\Omega)$ as finite element spaces,
  which we call $P_{k+1}$--$P_{k}/P_{k}$ approximation.
  The $L^2$-projection from  
  $\prod_{K \in \T_h} \vec L^2(\dK)$ onto $\prod_{K \in \T_h} \vec P_{k}(\dK)$ 
   is denoted by $\Proj_{k}$, and $\I$ stands for the identity operator.
      
  \subsection{Norms and seminorms}  
 As usual, we use the Sobolev norms 
 $|\vec v|_m = |\vec v|_{\vec H^m(\Omega)}$ and $\|\vec v\|_{m,D} = \|\vec v\|_{\vec H^m(\Omega)}$ for a domain $D$. The $L^2$-norm is denoted by $\|\vec v\| = \|\vec v\|_{0, D} = \|\vec v\|_{\bm L^2(D)}$.
 The energy norms are defined as follows: for $(\vec v, \vech v) \in \vec H^2(\T_h) \times \bm L^2(\Gamma_h)$,
 \begin{align*}
    \enorm{(\vec v, \vech v)}^2 & = |\vec v|_{1,h}^2 + |\vec v|_{2,h}^2 +  |(\vec v, \vech v)|^2_{\rm j}, \\
    \enorm{(\vec v, \vech v)}^2_\tau & = |\vec v|_{1,h}^2 + |\vec v|_{2,h}^2  +  |(\vec v, \vech v)|^2_{\rm j, \tau},\\
    \enorm{(\vec v, \vech v)}^2_h & = |\vec v|_{1,h}^2  +  |(\vec v, \vech v)|^2_{\rm j}, \\
    \enorm{(\vec v, \vech v)}^2_{h,\tau} & = |\vec v|_{1,h}^2  +  |(\vec v, \vech v)|^2_{\rm j, \tau},
\end{align*}
   where
   \begin{align*}
    |(\vec v, \vech v)|^2_{\rm j} &= 
     \sumK \sum_{e\subset \dK} \frac{1}{h_e} \|\Proj_{k}(\vech v - \vec v)\|_{0,e}^2, \\
    |(\vec v, \vech v)|^2_{\rm j,\tau}, &= 
    \tau |(\vec v, \vech v)|^2_{\rm j} \\ 
    |\vec v|_{1,h}^2 & = \sumK  |\vec v|_{1,K}^2, \\
    |\vec v|_{2,h}^2 &=\sumK  h_K^2|\vec v|_{2,K}^2. 
    \end{align*}
    The symbol $\tau$ is a stabilization parameter which will be defined in Section \ref{sec:rhdg}.
    The parameter-free energy norms $\enorm{\cdot}$
    and  $\enorm{\cdot}_h$ are used to 
    analyze the convergence rate with respect to the mesh size $h$.
    We need the parameter-dependent energy norms in order to 
    analyze the proposed method when $\tau \to \infty$.
    We also use the stronger $L^2$ norm 
 \[
  \|q\|_{h}^2 = \|q\|^2 + \sum_{K \in \T_h} h_K^2 |q|_{1,K}^2.
 \] 
 By the inverse inequality\cite{Arnold1982}, we see that the two energy norms are equivalent to each other
  on $\V \times \hV$, i.e.,  
 \begin{align} \label{en-equiv}
    &\enorm{(\vec v_h, \vech v_h)}_h 
	 \le \enorm{(\vec v_h, \vech v_h)}
	 \le C\enorm{(\vec v_h, \vech v_h)}_h , \\
	&\enorm{(\vec v_h, \vech v_h)}_{h,\tau} 
	 \le \enorm{(\vec v_h, \vech v_h)}_\tau
     \le C\enorm{(\vec v_h, \vech v_h)}_{h,\tau} 
 \end{align} 
 for some constant $C>0$ independent of $h$. 
 Similarly, it holds that $\|q_h\| \le \|q_h\|_{h} \le C \| q_h\|$ 
 for all $q_h \in Q_h^{k}$.  
 In the following, the symbol $C$ will stand for a generic constant 
 independent of the mesh size $h$ and the stabilization parameter $\tau$. 
     
\subsection{Approximation property}
 The approximation property in the energy norm holds 
  as well as in the standard energy norm.
\begin{lemma}
 If $\vec \psi \in  \vec H^{k+2}(\Omega)$
  and $\pi \in H^{k+1}(\Omega)$, then we have
 \begin{align}  
 \inf_{(\vec v_h, \vech v_h)  \in \vec V_h^{k+1} \times \vech V_h^{k}}
  \enorm{(\vec \psi - \vec v_h,\vec \psi|_{\Gamma_h} - \vech v_h)}
   &\le Ch^{k+1} |\vec \psi|_{\vec H^{k+2}(\Omega)},
  \\ 
  \inf_{q_h \in Q_h^{k}} \| \pi - q_h\| &\le C h^{k+1} |\pi|_{H^{k+1}(\Omega)}. 
 \end{align}

 \end{lemma}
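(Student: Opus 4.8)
The plan is to bound each infimum by evaluating the norm at a single, explicitly constructed pair, so that both estimates reduce to element-wise polynomial approximation. For the first bound I would take $\vec v_h \in \V$ to be a standard degree-$(k+1)$ approximation of $\vec\psi$ on each element (the $L^2$-orthogonal projection onto $\vec P_{k+1}(K)$, or any quasi-interpolant enjoying the usual Bramble--Hilbert estimates), and I would take $\vech v_h \in \hV$ to be the edge-wise projection $\Proj_k(\vec\psi|_{\Gamma_h})$ on interior edges and $\vech v_h = \vec 0$ on $\partial\Omega$. Since the quantity of interest is the velocity, which vanishes on $\partial\Omega$, this boundary choice is consistent with the trace of $\vec\psi$ there. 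With this pair fixed, I would split $\enorm{\cdot}^2$ into its three constituent seminorms and bound them one at a time.

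The two volume terms are routine. By the standard interpolation estimate $|\vec\psi - \vec v_h|_{m,K} \le C h_K^{k+2-m}|\vec\psi|_{k+2,K}$ for $m=1,2$, I obtain $|\vec\psi-\vec v_h|_{1,h}^2 \le C h^{2(k+1)}|\vec\psi|_{k+2}^2$ directly, while for the scaled Hessian term the extra factor $h_K^2$ compensates the lower power, giving $h_K^2|\vec\psi-\vec v_h|_{2,K}^2 \le C h_K^{2(k+1)}|\vec\psi|_{k+2,K}^2$ as well. Summing over $K \in \T_h$ yields the desired order for both.

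The jump term is where the real work lies, and the key is a cancellation coming from the continuity of $\vec\psi$. In $|(\vec\psi-\vec v_h,\,\vec\psi|_{\Gamma_h}-\vech v_h)|_{\rm j}$ the argument of $\Proj_k$ on an edge $e \subset \dK$ is $(\vec\psi|_{\Gamma_h}-\vech v_h)-(\vec\psi-\vec v_h)|_e$; because $\vec\psi$ is single-valued on $e$, its two traces coincide and this collapses to $\vec v_h|_e - \vech v_h$. With the projection choice for $\vech v_h$ this equals exactly $\Proj_k\bigl((\vec v_h - \vec\psi)|_e\bigr)$, so I can use that $\Proj_k$ is an $L^2$-contraction to discard it. The remaining edge norm $\|(\vec\psi-\vec v_h)|_e\|_{0,e}$ I would convert to volume norms through the trace inequality $\|w\|_{0,e}^2 \le C(h_K^{-1}\|w\|_{0,K}^2 + h_K|w|_{1,K}^2)$; after multiplying by $1/h_e$ and using shape-regularity ($h_e \simeq h_K$) together with the $L^2$- and $H^1$-approximation estimates, each edge contributes $C h_K^{2(k+1)}|\vec\psi|_{k+2,K}^2$. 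Summing over the boundedly many edges of each $K$ and over all elements closes the first estimate.

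For the pressure bound I would simply take $q_h = \Proj_k^{\,0}\pi$, the element-wise $L^2$-projection of $\pi$ onto $P_k(\T_h)$. Since constants lie in $P_k(\T_h)$, this projection preserves the mean value, so $q_h \in Q_h^{k}$ when $\pi \in L^2_0(\Omega)$, and the classical estimate $\|\pi - q_h\|_{0,K} \le C h_K^{k+1}|\pi|_{k+1,K}$ summed over $K$ gives the claim. The only genuine obstacle in the whole argument is the jump term, and specifically spotting that the continuous part of $\vec\psi$ cancels before the trace inequality is invoked; once that cancellation is used, everything reduces to textbook approximation theory.
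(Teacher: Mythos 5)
The paper offers no proof of this lemma at all: it simply defers to the author's earlier work \cite{OikawaToAppear}. Your argument is a correct, self-contained proof along exactly the lines one expects that reference to contain: element-wise $L^2$-projection (or quasi-interpolant) for $\vec v_h$, edge-wise projection for $\vech v_h$, Bramble--Hilbert estimates for the two volume seminorms (where the weight $h_K^2$ on the Hessian term indeed restores the full order $k+1$), cancellation of the single-valued trace of $\vec\psi$ inside the jump seminorm, the $L^2$-contraction property of $\Proj_{k}$, and the scaled trace inequality with shape-regularity $h_e \simeq h_K$. The one genuinely delicate point is the boundary: since $\hV$ forces $\vech v_h = \vec 0$ on $\partial\Omega$, the jump contribution of a boundary edge reduces to $h_e^{-1}\|\Proj_{k}\vec v_h\|_{0,e}^2$, which is only controllable when the trace of $\vec\psi$ on $\partial\Omega$ vanishes; for a general $\vec\psi \in \vec H^{k+2}(\Omega)$ with nonzero boundary trace the claimed rate cannot hold. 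You correctly flag this and resolve it by observing that the lemma is only ever invoked for velocity fields in $\vec H^1_0(\Omega)$ (the exact solution and the adjoint solution), in which case $\Proj_{k}(\vec\psi|_e) = \vec 0 = \vech v_h$ on boundary edges and your interior-edge cancellation argument carries over verbatim. The pressure bound, including the observation that the element-wise projection preserves the zero mean so that $q_h \in Q_h^{k}$ (again under the implicit assumption $\pi \in L^2_0(\Omega)$), is likewise correct.
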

 \begin{proof}
  We refer to \cite{OikawaToAppear}.   
 \end{proof}

\section{A reduced-order HDG method} \label{sec:rhdg}
 In this section, we present a reduced-order HDG method
  based on the HDG method proposed by Egger and Waluga in \cite{EgWa2013}. 
 By taking the $L^2$-projection onto 
 the polynomial space of lower degree by one in the stabilization term of the standard method, we 
 obtain the reduced-order HDG method: find $(\vec u_h, \vech u_h, p_h) 
 \in \vec V_h^{k+1} \times \vech V_h^{k} \times Q_h^{k}$ such that
 \begin{subequations}\label{rhdg}
 \begin{align}
   a_h(\vec u_h, \vech u_h; \vec v_h, \vech v_h)    
   +  b_h(\vec v_h, \vech v_h;  p_h)
   &= (\vec f, \vec v_h)_\Omega
   & \forall& (\vec v_h, \vech v_h) \in \vec V_h^{k+1} \times \vech V_h^{k},   
   \label{rhdg1}
   \\
   b_h(\vec u_h, \vech u_h; q_h) &= 0 
   & \forall& q_h \in Q_h^{k},
   \label{rhdg2}
 \end{align}
 \end{subequations}
 where the bilinear forms are given by
 \begin{align*}
   a_h(\vec u_h, \vech u_h; \vec v_h, \vech v_h)
    &= (\nabla \vec u_h, \nabla \vec v_h)_{\T_h} 
       + \ang{\ngrad \vec u_h, \vech v_h - \vec v_h}_{\dT_h} 
      + \ang{\ngrad \vec v_h, \vech u_h - \vec u_h}_{\dT_h} \\
    &  \quad + \ang{\tau h_e^{-1} \Proj_{k}(\vech u_h - \vec u_h), \Proj_{k}(\vech v_h - \vec v_h)}_{\dT_h}, \\  
    b_h(\vec v_h, \vech v_h;  p_h) 
     & = - (\dive \vec v_h, p_h)_{\T_h} - \ang{\vech v_h - \vec v_h, p_h\vec n}_{\dT_h}.
 \end{align*}
 Here $\tau$ is a stabilization parameter
  assumed to be greater than or equal to one and sufficiently large.
 We recall that $\Proj_{k}$, which is defined in Section 2.1., is the $L^2$-projection
 onto the edge-wise polynomial space of degree $k$.

\begin{remark}
  In the two-dimensional case, 
  we can easily implement the reduced method by using a reduced-order quadrature formula
  in the computations of the reduced stabilization term, see \cite[Lemma 5]{OikawaToAppear} for details.
\end{remark}
   
\section{Error analysis}
In this section, we provide the optimal error estimates of the method in both the energy and $L^2$
 norms.
  To do that, we  first show the consistency of the method,  the boundedness of $a_h$ and $b_h$,  and the coercivity of $a_h$.  
  In addition,  the discrete inf-sup condition of $b_h$
  is proved based on the results of \cite{EgWa2013}. 

\subsection{Consistency}
 We state the consistency and adjoint consistency of the method.
\begin{lemma}
 Let $(\vec u, p)$ be the exact solution of the Stokes equations \eqref{stokes}.
 Then we have 
 \begin{equation} 
 \begin{aligned}
    a_h(\vec u, \vec u|_{\Gamma_h}; \vec v_h, \vech v_h)
       + b_h(\vec v_h, \vech v_h; p) &= (\vec f, \vec v_h)_\Omega
       &\forall &(\vec v_h, \vech v_h) \in \V\times \hV, \\
     b_h(\vec u, \vec u|_{\Gamma_h};  q_h) &= 0
      &  \forall& q_h \in Q^{k}_h. 
 \end{aligned}
 \label{cons}
 \end{equation}
\end{lemma}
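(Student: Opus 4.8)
The plan is to substitute the exact solution $(\vec u, \vec u|_{\Gamma_h}, p)$ into the discrete forms and collapse everything back to the strong form \eqref{stokes} by element-wise integration by parts, using the regularity of $(\vec u,p)$ (guaranteed by the convexity of $\Omega$, giving $\vec u \in \vec H^2(\T_h)$ and $p \in H^1(\T_h)$ with single-valued traces) to eliminate the consistency-breaking terms. The crucial structural observation is that, since $\vec u$ is continuous, the trace of $\vec u$ from inside each $K$ coincides with $\vec u|_{\Gamma_h}$, so that $\vec u|_{\Gamma_h} - \vec u = \vec 0$ on every $e \subset \dK$. This at once annihilates the adjoint term $\ang{\ngrad \vec v_h, \vec u|_{\Gamma_h} - \vec u}_{\dT_h}$ and the stabilization term $\ang{\tau h_e^{-1}\Proj_{k}(\vec u|_{\Gamma_h} - \vec u), \Proj_{k}(\vech v_h - \vec v_h)}_{\dT_h}$ in $a_h$, leaving only $(\nabla \vec u, \nabla \vec v_h)_{\T_h} + \ang{\ngrad \vec u, \vech v_h - \vec v_h}_{\dT_h}$.

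Next I would apply Green's formula on each element. For the diffusion part, $(\nabla \vec u, \nabla \vec v_h)_{\T_h} = -(\Delta \vec u, \vec v_h)_{\T_h} + \ang{\ngrad \vec u, \vec v_h}_{\dT_h}$, so adding the surviving facet term gives $-(\Delta \vec u, \vec v_h)_{\T_h} + \ang{\ngrad \vec u, \vech v_h}_{\dT_h}$. For the pressure, integrating $-(\dive \vec v_h, p)_{\T_h}$ by parts yields $(\nabla p, \vec v_h)_{\T_h} - \ang{\vec v_h, p\vec n}_{\dT_h}$, and combining with the facet term of $b_h$ produces $(\nabla p, \vec v_h)_{\T_h} - \ang{\vech v_h, p\vec n}_{\dT_h}$. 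Summing the two contributions, the body integrals combine into $(-\Delta \vec u + \nabla p, \vec v_h)_{\T_h}$, which equals $(\vec f, \vec v_h)_\Omega$ by the momentum equation.

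The step needing the most care is showing that the two remaining facet integrals against $\vech v_h$ vanish, namely $\ang{\ngrad \vec u, \vech v_h}_{\dT_h} = 0$ and $\ang{\vech v_h, p\vec n}_{\dT_h} = 0$. Here I use that $\vech v_h$ is single-valued on the skeleton together with the continuity of the one-sided fluxes of the smooth exact solution: on an interior edge shared by $K_1, K_2$ the outward normals satisfy $\vec n_{K_1} = -\vec n_{K_2}$, while $\nabla \vec u$ and $p$ have single-valued traces, so the two one-sided contributions cancel; on boundary edges $\vech v_h = \vec 0$ by definition of $\hV$. Granting these cancellations establishes the first identity. The second identity is immediate, since $b_h(\vec u, \vec u|_{\Gamma_h}; q_h) = -(\dive \vec u, q_h)_{\T_h} - \ang{\vec u|_{\Gamma_h} - \vec u, q_h\vec n}_{\dT_h}$ vanishes term by term: the first term because $\dive \vec u = 0$, and the second because $\vec u|_{\Gamma_h} - \vec u = \vec 0$ on $\Gamma_h$.
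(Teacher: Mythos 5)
Your proof is correct and follows the same route as the paper, whose entire proof is the one-line observation that $\vec u|_{\Gamma_h} - \vec u = \vec 0$ on $\Gamma_h$ kills the adjoint and stabilization terms, the rest being deemed easy to see. You have simply written out the details the paper leaves implicit: the element-wise Green's formula, the cancellation of one-sided fluxes of $\ngrad\vec u - p\vec n$ across interior facets against the single-valued $\vech v_h$ (using the $H^2\times H^1$ regularity from convexity of $\Omega$), and the vanishing of $\vech v_h$ on $\partial\Omega$.
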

\begin{proof}
 Since $\vec u - \vec u|_{\Gamma_h} = \vec 0$ on $\Gamma_h$,
 we can easily see that the consistency  \eqref{cons} holds.   
\end{proof}
 
 Let $(\vec u_h, \vech u_h, p_h) \in \V \times \hV \times Q_h^{k}$
 be the solution of the  method \eqref{rhdg}. 
  From the consistency, the Galerkin orthogonality  follows immediately:
 \begin{equation}\label{gorth}
 \begin{aligned}
    a_h(\vec u - \vec u_h, \vec u|_{\Gamma_h} - \vech u_h; \vec v_h, \vech v_h)
       + b_h(\vec v_h, \vech v_h; p - p_h) &= 0 
       &\forall& (\vec v_h, \vech v_h) \in \V \times \hV, \\
     b_h(\vec u-\vec u_h, \vec u|_{\Gamma_h} -\vech u_h;  q_h) &= 0
      &  \forall& q_h \in Q^{k}_h.
 \end{aligned}
 \end{equation}
 Due to the symmetricity of $a_h$,  we readily  see that the adjoint consistency also holds:
     \begin{equation} 
     \label{acons}
 \begin{aligned}
    a_h(\vec v_h, \vech v_h; \vec u, \vec u|_{\Gamma_h})
       + b_h(\vec v_h, \vech v_h; p) &= (\vec f, \vec v_h)_\Omega 
       &\forall& (\vec v_h, \vech v_h) \in \V \times \hV, \\
     b_h(\vec u, \vec u|_{\Gamma_h};  q_h) &= 0
      &  \forall& q_h \in Q_h^{k}. 
 \end{aligned}
 \end{equation}
  
\subsection{Boundedness and coercivity}
 We first prove the boundedness of $a_h$ and $b_h$.
\begin{lemma} \label{bdd-ah}
 Let $(\vec\xi,\vech \xi) = (\vec{w} + \vec w_h, \vec{w}|_{\Gamma_h} + \vech w_h)$
 and $(\vec\eta,\vech \eta) = (\vec{v} + \vec v_h , \vec{v}|_{\Gamma_h} + \vech v_h)$,
 where $(\vec w_h, \vech w_h)$,
 $(\vec v_h , \vech v_h) \in  \V \times \hV$ and  
 $\vec{w}, \vec v \in \vec H^1_0(\Omega)$.  
 Then there exists a constant $C>0$ independent of $h$ and $\tau$ such that
 \begin{equation} \label{bdd-a}
  |a_h(\vec\xi, \vech \xi;  \vec\eta, \vech \eta)|
          \le C \tau\enorm{(\vec\xi, \vech \xi)}
                \enorm{(\vec\eta, \vech \eta)}.
 \end{equation}
 With respect to the parameter-dependent energy norm, we have 
 \begin{equation} \label{bdd-a-tau}
  |a_h(\vec\xi, \vech \xi; \vec\eta, \vech \eta)|
          \le C \enorm{(\vec\xi, \vech \xi)}_\tau
                \enorm{(\vec\eta, \vech \eta)}_\tau.
 \end{equation}
\end{lemma}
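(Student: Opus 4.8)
The plan is to bound $a_h$ termwise according to its four constituents and to isolate the single contribution that the reduced (weaker) energy norm fails to control directly. Throughout I use that the continuous parts cancel on the skeleton, so that $\vech\xi-\vec\xi=\vech w_h-\vec w_h$ and $\vech\eta-\vec\eta=\vech v_h-\vec v_h$ carry only the discrete jumps, and I use the scaled trace inequality in the form $\sum_{K}\sum_{e\subset\dK}h_e\|\ngrad\vec\xi\|_{0,e}^2\le C(|\vec\xi|_{1,h}^2+|\vec\xi|_{2,h}^2)\le C\enorm{(\vec\xi,\vech\xi)}^2$, which is precisely why the norm carries the weighted second-order seminorm $|\cdot|_{2,h}$.

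The volume term $(\nabla\vec\xi,\nabla\vec\eta)_{\T_h}$ is controlled by $|\vec\xi|_{1,h}\,|\vec\eta|_{1,h}$, and the reduced stabilization term $\ang{\tau h_e^{-1}\Proj_k(\vech\xi-\vec\xi),\Proj_k(\vech\eta-\vec\eta)}_{\dT_h}$ by $\tau\,|(\vec\xi,\vech\xi)|_{\rm j}\,|(\vec\eta,\vech\eta)|_{\rm j}$, both by Cauchy--Schwarz; these already fit the right-hand side of \eqref{bdd-a} since $\tau\ge 1$. For the parameter-dependent bound \eqref{bdd-a-tau} I split the factor $\tau$ symmetrically as $\sqrt\tau\cdot\sqrt\tau$, so that the stabilization term reads $|(\vec\xi,\vech\xi)|_{\rm j,\tau}\,|(\vec\eta,\vech\eta)|_{\rm j,\tau}$ and no stray power of $\tau$ is left over; the remaining terms are $\tau$-independent and are dominated trivially by $\enorm{\cdot}_\tau$.

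The two coupling terms $\ang{\ngrad\vec\xi,\vech\eta-\vec\eta}_{\dT_h}$ and $\ang{\ngrad\vec\eta,\vech\xi-\vec\xi}_{\dT_h}$ are the substance of the proof; they are symmetric, so I treat the first. I split the jump with the projection, $\vech\eta-\vec\eta=\Proj_k(\vech\eta-\vec\eta)+(\I-\Proj_k)(\vech\eta-\vec\eta)$. Pairing $\ngrad\vec\xi$ against the first summand and applying Cauchy--Schwarz edge by edge together with the trace estimate above yields the bound $C\enorm{(\vec\xi,\vech\xi)}\,|(\vec\eta,\vech\eta)|_{\rm j}$, which is of the desired form. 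The remaining piece $\ang{\ngrad\vec\xi,(\I-\Proj_k)(\vech\eta-\vec\eta)}_{\dT_h}$ is invisible to the reduced stabilization and is where the argument departs from the standard HDG analysis. Here I would use two structural facts: since $\vec w_h\in\vec P_{k+1}(\T_h)$ its elementwise normal derivative is a facet polynomial of degree $k$, so $(\I-\Proj_k)\ngrad\vec w_h=0$ and, by the self-adjointness and idempotency of $\Proj_k$, the term reduces to $\ang{(\I-\Proj_k)\ngrad\vec w,\,\vech\eta-\vec\eta}_{\dT_h}$; and the reduced-stabilization estimate of \cite{OikawaToAppear}, which bounds the unprojected remainder $\sum_{K}\sum_{e\subset\dK}h_e^{-1}\|(\I-\Proj_k)(\vech\eta-\vec\eta)\|_{0,e}^2$ by a broken gradient seminorm, thereby tying the part of the jump unseen by the norm back to $|\cdot|_{1,h}$.

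I expect this last step to be the main obstacle. The difficulty is exactly that the reduced energy norm does not see $(\I-\Proj_k)(\vech\eta-\vec\eta)$, so one cannot simply estimate against the full jump as in the standard method; one must instead combine the degree count that annihilates $(\I-\Proj_k)\ngrad\vec w_h$ with the key estimate of \cite{OikawaToAppear}, and exploit the $\vec H^2$-regularity of the continuous parts $\vec w,\vec v$ (which makes $\ngrad$ single-valued across interelement facets), so that their normal-derivative contributions on the skeleton telescope and leave no quantity beyond those already measured by $\enorm{(\vec\xi,\vech\xi)}$ and $\enorm{(\vec\eta,\vech\eta)}$. Assembling the four estimates gives \eqref{bdd-a}, and repeating the same bookkeeping with the $\sqrt\tau$-weighted jump seminorm in place of $|(\vec\eta,\vech\eta)|_{\rm j}$ yields \eqref{bdd-a-tau}.
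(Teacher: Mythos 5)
Your proposal is correct and follows essentially the same route as the paper's proof: the same $\Proj_{k}$-splitting of the coupling terms, the same two structural facts (the facet degree count giving $(\I-\Proj_{k})\ngrad\vec w_h=\vec 0$, and the single-valuedness of $\ngrad\vec w$ paired against the continuous remainder, which together yield the paper's identity $\ang{\ngrad\vec\xi,(\I-\Proj_{k})\vec v}_{\dT_h}=0$), and the same key estimate \eqref{ineq1} from \cite{OikawaToAppear}. One caution for the write-up: that key estimate must be applied to $(\I-\Proj_{k})\vec\eta$ \emph{after} the telescoping conversion, not to $(\I-\Proj_{k})(\vech\eta-\vec\eta)=-(\I-\Proj_{k})\vec v_h$ as your middle paragraph literally states, since the latter is only controlled by $|\vec v_h|_{1,h}$, which $\enorm{(\vec\eta,\vech\eta)}$ does not bound; your final paragraph assembles the pieces in the correct order.
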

\begin{proof} 
 By the Schwarz inequality, the first term of  $a_h$ is bounded as
 \begin{align}
  |(\nabla \vec \xi, \nabla \vec \eta)_{\T_h}| \le 
  |\vec \xi|_{1,h} |\vec\eta|_{1,h}.
 \end{align}
 We estimate the second term. Note that
 \begin{equation}\label{bdd1} \begin{aligned}
 \ang{\ngrad \vec \xi, \vech \eta - \vec \eta}_{\dT_h} 
  &=\ang{\ngrad \vec{\xi}, \Proj_{k}(\vech \eta - \vec \eta)}_{\dT_h} 
   +\ang{\ngrad \vec{\xi}, (\I-\Proj_{k})(\vech \eta - \vec \eta)}_{\dT_h} 
     \\
  &=\ang{\ngrad \vec{\xi}, \Proj_{k}(\vech \eta - \vec \eta)}_{\dT_h} 
    -\ang{\ngrad \vec{\xi}, (\I-\Proj_{k})\vec v_h}_{\dT_h}. 
 \end{aligned}\end{equation}
 Since $\ang{\ngrad \vec{w}, (\I-\Proj_{k})\vec{v}}_{\dT_h} = 
 \ang{\ngrad \vec w_h, (\I-\Proj_{k})\vec{v}}_{\dT_h} = 0$, it follows that 
 \[
  \ang{\ngrad \vec{\xi}, (\I-\Proj_{k})\vec{v}}_{\dT_h}=0.
 \]
 Using this, we deduce that 
 $\ang{\ngrad \vec{\xi}, (\I-\Proj_{k})\vec v_h}_{\dT_h} 
   = \ang{\ngrad \vec{\xi}, (\I-\Proj_{k})\vec \eta}_{\dT_h}$. 
 Then we have
 \begin{equation}\label{bdd4} 
 \begin{aligned} 
  |\ang{\ngrad \vec \xi, \vech \eta - \vec \eta}_{\dT_h}|
  &= |\ang{\ngrad \vec\xi, \Proj_{k}(\vech v_h - \vec v_h)
  -(\I - \Proj_{k}) \vec\eta}_{\dT_h}| \\
  &\le C \max\{1, \tau^{-1/2}\}(|\vec \xi|_{1,h}^2 
    +h^2|\vec \xi|_{2,h}^2)^{1/2}
   (|(\vec\eta, \vech\eta)|_{\rm j, \tau}^2  +  |\vec\eta|_{1,h}^2)^{1/2} \\
  &\le C\enorm{(\vec\xi, \vech \xi)}_\tau
        \enorm{(\vec\eta, \vech \eta)}_\tau,
 \end{aligned}    
 \end{equation} 
 where we have used the trace inequality  and the following estimate (see 
 \cite{OikawaToAppear} for the proof)
 \begin{equation} \label{ineq1}
     |(\vec\eta, \vech\eta)|_{\rm j} \le C|\vec\eta|_{1,h}^2.
 \end{equation}
 The stabilization term is bounded as
 \begin{equation}\label{bdd5} 
 \begin{aligned}
  |\ang{\tau h_e^{-1} \Proj_{k}(\vech \xi - \vec \xi), \Proj_{k}(\vech \eta - \vec \eta)}_{\dT_h}|
  &\le |(\vec \xi, \vech \xi)|_{\rm j, \tau} |(\vec \eta, \vech \eta)|_{\rm j, \tau}.
 \end{aligned}
 \end{equation} 
 From \eqref{bdd1}, \eqref{bdd4} and \eqref{bdd5},
 we obtain the boundedness \eqref{bdd-a-tau}. 
 With a slight modification, we can also show that 
 \eqref{bdd-a}.   
\end{proof}

\begin{lemma}[Boundedness of $b_h$] \label{bdd-bh}
 Let $(\vec\xi,\vech \xi)$ be the same as in Lemma \ref{bdd-ah}
 and $r = q + q_h$ with $q \in H^1(\T_h) \cap L^2_0(\Omega)$
 and $q_h \in Q_h^{k}$.  
 Then there exists a constant $C>0$ independent $h$ and $\tau$ such that
 \begin{equation} \label{bdd-b}
  |b_h(\vec \xi, \vech \xi; r)| \le 
  C \enorm{(\vec \xi, \vech \xi)} \|r\|_{h}. 
 \end{equation}
 In particular, in the case of $r = q_h \in Q_h^{k}$, we have
 \begin{equation} \label{bdd-b-qh}
   |b_h(\vec \xi, \vech \xi; q_h)| \le 
   C \enorm{(\vec \xi, \vech \xi)} \|q_h \|. 
 \end{equation}         
\end{lemma}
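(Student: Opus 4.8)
The plan is to treat the two constituents of $b_h(\vec\xi,\vech\xi;r)=-(\dive\vec\xi,r)_{\T_h}-\ang{\vech\xi-\vec\xi,r\vec n}_{\dT_h}$ separately. The volume term is immediate: the Schwarz inequality gives $|(\dive\vec\xi,r)_{\T_h}|\le\|\dive\vec\xi\|\,\|r\|\le C|\vec\xi|_{1,h}\|r\|_h$, which already sits inside the right-hand side of \eqref{bdd-b}. Everything therefore reduces to the facet term, and this is where the reduced stabilization forces a departure from the standard analysis.

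The first move is to exploit the structure of the pair. Because $\vec w\in\vec H^1_0(\Omega)$ is single-valued, the smooth contribution cancels on $\Gamma_h$, so $\vech\xi-\vec\xi=\vech w_h-\vec w_h$ edge by edge; splitting off the projection and using $\Proj_k\vech w_h=\vech w_h$ gives $(\I-\Proj_k)(\vech\xi-\vec\xi)=-(\I-\Proj_k)\vec w_h$. Accordingly I would write
\[
 \ang{\vech\xi-\vec\xi,r\vec n}_{\dT_h}=\ang{\Proj_k(\vech\xi-\vec\xi),r\vec n}_{\dT_h}-\ang{(\I-\Proj_k)\vec w_h,r\vec n}_{\dT_h}.
\]
The first piece is exactly what the reduced jump seminorm controls: an edgewise Schwarz inequality with the weights $h_e^{\pm1/2}$ together with the trace bound $\sum_e h_e\|r\|_{0,e}^2\le C\|r\|_h^2$ yields $C|(\vec\xi,\vech\xi)|_{\rm j}\|r\|_h\le C\enorm{(\vec\xi,\vech\xi)}\|r\|_h$.

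The second piece is the crux. Writing $r=q+q_h$, the discrete pressure drops out for free: $q_h\vec n$ is edgewise of degree $k$, so the $L^2(e)$-orthogonality of $(\I-\Proj_k)\vec w_h$ to $P_k$ forces $\ang{(\I-\Proj_k)\vec w_h,q_h\vec n}_{\dT_h}=0$. This single observation is what produces the sharp estimate \eqref{bdd-b-qh}: when $r=q_h$ the facet term collapses to its projected part, for which $\sum_e h_e\|q_h\|_{0,e}^2\le C\|q_h\|^2$ holds by the polynomial trace-inverse inequality, so $\|q_h\|$ rather than $\|q_h\|_h$ suffices. For the remaining $H^1$ part I would reinsert the projection, $\ang{(\I-\Proj_k)\vec w_h,q\vec n}_{\dT_h}=\ang{(\I-\Proj_k)\vec w_h,(\I-\Proj_k)(q\vec n)}_{\dT_h}$, and estimate the two factors by scaled trace/approximation inequalities of the form $\|(\I-\Proj_k)\vec w_h\|_{0,e}\le Ch_K^{1/2}|\vec w_h|_{1,K}$ and $\|(\I-\Proj_k)q\|_{0,e}\le Ch_K^{1/2}|q|_{1,K}$; Cauchy--Schwarz over the edges then produces a bound of the form $C|\vec w_h|_{1,h}(\sum_K h_K^2|q|_{1,K}^2)^{1/2}\le C\enorm{(\vec\xi,\vech\xi)}\|r\|_h$.

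I expect the last term to be the main obstacle. In the standard HDG method the full jump $\vech v-\vec v$ is measured by the stabilization, so the facet term is bounded directly against the jump seminorm; here the reduced seminorm sees only $\Proj_k(\vech\xi-\vec\xi)$, and the complementary, uncontrolled part $(\I-\Proj_k)\vec w_h$ must instead be absorbed through the polynomial $L^2$-orthogonality and an approximation estimate. It is precisely this step that forces the term $\sum_K h_K^2|q|_{1,K}^2$, hence the stronger norm $\|r\|_h$, onto the right-hand side of \eqref{bdd-b}, while the weaker orthogonality against $q_h\in Q_h^{k}$ is what permits the sharper bound \eqref{bdd-b-qh}. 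The one bookkeeping point requiring care is that the polynomial residual factor be dominated by $\enorm{(\vec\xi,\vech\xi)}$ through the definition of the energy norm; I would handle this with the inverse inequality and the estimate \eqref{ineq1}, exactly as in the proof of Lemma \ref{bdd-ah}.
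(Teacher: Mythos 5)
Your volume-term estimate, the splitting $(\I-\Proj_{k})(\vech\xi-\vec\xi)=-(\I-\Proj_{k})\vec w_h$, the bound of the projected part by $C|(\vec\xi,\vech\xi)|_{\rm j}\|r\|_{h}$, and the orthogonality that kills $\ang{(\I-\Proj_{k})\vec w_h,q_h\vec n}_{\dT_h}$ are all correct, and together they do prove \eqref{bdd-b-qh}. The gap is in the last step of your argument for \eqref{bdd-b}. Your estimate of the leftover term reads
\[
\bigl|\ang{(\I-\Proj_{k})\vec w_h,\,q\vec n}_{\dT_h}\bigr|
\le C\,|\vec w_h|_{1,h}\Bigl(\sumK h_K^2|q|_{1,K}^2\Bigr)^{1/2},
\]
and the concluding inequality $\le C\enorm{(\vec\xi,\vech\xi)}\,\|r\|_{h}$ is unjustified: the factors $|\vec w_h|_{1,h}$ and $\sum_K h_K^2|q|_{1,K}^2$ are attached to the individual pieces of the \emph{non-unique} decompositions $\vec\xi=\vec w+\vec w_h$ and $r=q+q_h$, and they are not controlled by $\enorm{(\vec\xi,\vech\xi)}$ and $\|r\|_{h}$. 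Concretely, take $\vec w_h$ a large continuous piecewise polynomial vanishing on $\rd\Omega$, set $\vec w=-\vec w_h\in\vec H^1_0(\Omega)$ and $\vech w_h=\Proj_{k}(\vec w_h|_{\Gamma_h})$; then $\vec\xi=\vec 0$ and $\Proj_{k}(\vech\xi-\vec\xi)=\vec 0$, so $\enorm{(\vec\xi,\vech\xi)}=0$, while $|\vec w_h|_{1,h}$ is arbitrarily large. Your closing suggestion that the inverse inequality and \eqref{ineq1} repair this cannot work: those estimates bound facet quantities of a function by volume seminorms of the \emph{same} function; they never bound a piece of a decomposition by the sum.

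The idea you are missing is that this term must be eliminated by an exact identity, not estimated. The paper observes that $\ang{(\I-\Proj_{k})\vec w,\,r\vec n}_{\dT_h}=0$: the $q_h\vec n$ part vanishes by the same edgewise $L^2$-orthogonality you used, and the $q$ part vanishes because $(\I-\Proj_{k})\vec w$ and $q$ are single-valued across interior edges while the two outward normals are opposite, and $\vec w=\vec 0$ on $\rd\Omega$ (this is where the hypothesis must be read as $q\in H^1(\Omega)\cap L^2_0(\Omega)$, which is how the lemma is actually applied, with $q=p$). Adding this zero converts your problematic term into $\ang{(\I-\Proj_{k})\vec w_h,\,r\vec n}_{\dT_h}=\ang{(\I-\Proj_{k})\vec\xi,\,r\vec n}_{\dT_h}$, which the trace inequality bounds by $C|\vec\xi|_{1,h}\|r\|_{h}$ --- a decomposition-free estimate that completes \eqref{bdd-b}. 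This cancellation is not optional: in the configuration above, with a genuinely discontinuous $q\in H^1(\T_h)$ one gets $b_h(\vec\xi,\vech\xi;q)=\ang{(\I-\Proj_{k})\vec w_h,\,q\vec n}_{\dT_h}\neq 0$ for suitable $q$ even though $\enorm{(\vec\xi,\vech\xi)}=0$, so no proof that treats $q$ only through local approximation estimates, as yours does, can establish \eqref{bdd-b}; the single-valuedness of $q$ must enter somewhere.
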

\begin{proof}
  By the Schwarz inequality, we have
  \begin{equation}\label{bdd-b1} 
   \begin{aligned} 
    |(\dive \vec \xi, r)_{\T_h}| \le |\vec \xi|_{1,h} \|r\|.
   \end{aligned}
  \end{equation} 
  Next, we estimate the second term of the bilinear form $b_h$.
  In a similar manner of \eqref{bdd1}, we have
  \begin{equation}\label{bdd-b2} 
  \begin{aligned} 
   \ang{\vech\xi  - \vec \xi, r\vec n}_{\dT_h}
   &=\ang{\Proj_{k}(\vech\xi  - \vec \xi), r\vec n}_{\dT_h}
    +\ang{(\I - \Proj_{k})(\vech\xi  - \vec \xi), r\vec n}_{\dT_h}\\
    &=\ang{\Proj_{k}(\vech\xi  - \vec \xi), r\vec n}_{\dT_h}
    -\ang{(\I - \Proj_{k})\vec w_h, r\vec n}_{\dT_h}.
   \end{aligned}
    \end{equation} 
   Since $\ang{(\I - \Proj_{k})\vec {w}, r\vec n}_{\dT_h} =0$,
    we get 
    \begin{align*}
    \ang{(\I - \Proj_{k})\vec w_h, r\vec n}_{\dT_h}
     =  \ang{(\I - \Proj_{k})(\vec{w} + \vec w_h), r\vec n}_{\dT_h}
     = \ang{(\I - \Proj_{k})\vec\xi, r\vec n}_{\dT_h}.
     \end{align*}
     Hence 
   \begin{equation*} 
    \begin{aligned}
    |\ang{\vech\xi  - \vec \xi, r\vec n}_{\dT_h}|
     &= |\ang{\Proj_{k}(\vech\xi  - \vec \xi) 
    -(\I - \Proj_{k})\vec \xi, r\vec n}_{\dT_h}| \\
     &\le C(|(\vec\xi, \vech\xi)|_{\rm j}^2+|\vec\xi|_{1,h}^2)^{1/2}
             \|r\|_{h},
    \end{aligned}
   \end{equation*} 
     where we have used the trace inequality and \eqref{ineq1}. 
  Consequently,  we obtain the inequality \eqref{bdd-b}.
  From the inverse inequality, \eqref{bdd-b-qh} follows immediately.   
   
\end{proof}

\begin{lemma}[Coercivity] \label{coer-ah}
 Assume that $\tau$ is sufficiently large.
  There exists a constant $C>0$ independent of $h$ and $\tau$ such that
 \begin{equation}\label{coer} 
  \begin{aligned} 
   a_h(\vec v_h, \vech v_h; \vec v_h, \vech v_h) \ge C\enorm{(\vec v_h, \vech v_h)}^2_\tau
   \quad \forall (\vec v_h, \vech v_h) \in \V \times \hV. 
  \end{aligned}
 \end{equation}
 In particular, we have
  \begin{equation}\label{coer-tau} 
  \begin{aligned} 
   a_h(\vec v_h, \vech v_h; \vec v_h, \vech v_h) \ge C\enorm{(\vec v_h, \vech v_h)}^2
   \quad \forall (\vec v_h, \vech v_h) \in \V \times \hV. 
  \end{aligned}
 \end{equation}
\end{lemma}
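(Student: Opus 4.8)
The plan is to test $a_h$ on the diagonal and isolate the only genuinely dangerous contribution. Setting $\vec u_h = \vec v_h$ in the definition of $a_h$ gives
\[
 a_h(\vec v_h, \vech v_h; \vec v_h, \vech v_h)
  = |\vec v_h|_{1,h}^2
    + 2\ang{\ngrad \vec v_h, \vech v_h - \vec v_h}_{\dT_h}
    + |(\vec v_h, \vech v_h)|_{\rm j,\tau}^2 .
\]
Since the target norm $\enorm{\cdot}_\tau$ also carries the second-order piece $|\vec v_h|_{2,h}^2$, which $a_h$ does not see on the diagonal, I would not aim at $\enorm{\cdot}_\tau$ directly. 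Instead I would first establish coercivity in the weaker norm $\enorm{(\vec v_h,\vech v_h)}_{h,\tau}^2 = |\vec v_h|_{1,h}^2 + |(\vec v_h,\vech v_h)|_{\rm j,\tau}^2$ and then promote it to $\enorm{\cdot}_\tau$ through the inverse-inequality equivalence \eqref{en-equiv}. The whole difficulty thus reduces to bounding the cross term $2\ang{\ngrad \vec v_h, \vech v_h - \vec v_h}_{\dT_h}$ from below by a quantity absorbable into $\tfrac12|\vec v_h|_{1,h}^2$ plus a small multiple of $|(\vec v_h,\vech v_h)|_{\rm j,\tau}^2$.

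The key observation, which I expect to be the crux, is that on the diagonal the reduced projection produces an exact cancellation. Splitting $\vech v_h - \vec v_h = \Proj_{k}(\vech v_h - \vec v_h) + (\I - \Proj_{k})(\vech v_h - \vec v_h)$ and using $\vech v_h \in \hV \subset \vec P_k(\E_h)$, so that $(\I-\Proj_{k})\vech v_h = \vec 0$, the non-projected part equals $-(\I-\Proj_{k})\vec v_h$. Now because $\vec v_h \in \V = \vec P_{k+1}(\T_h)$, its normal derivative $\ngrad \vec v_h$ is a polynomial of degree $\le k$ on each edge, hence $\Proj_{k}(\ngrad \vec v_h) = \ngrad \vec v_h$; by self-adjointness of $\Proj_{k}$ and $\Proj_{k}(\I-\Proj_{k}) = 0$ this forces $\ang{\ngrad \vec v_h, (\I-\Proj_{k})\vec v_h}_{\dT_h} = 0$. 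Consequently the cross term collapses to $2\ang{\ngrad \vec v_h, \Proj_{k}(\vech v_h - \vec v_h)}_{\dT_h}$, and edgewise Cauchy--Schwarz together with the discrete trace inequality $\sum_{K}\sum_{e\subset\dK} h_e\|\ngrad\vec v_h\|_{0,e}^2 \le C|\vec v_h|_{1,h}^2$ gives
\[
 \bigl|2\ang{\ngrad \vec v_h, \Proj_{k}(\vech v_h - \vec v_h)}_{\dT_h}\bigr|
  \le C |\vec v_h|_{1,h}\, |(\vec v_h,\vech v_h)|_{\rm j}
   = C\tau^{-1/2} |\vec v_h|_{1,h}\, |(\vec v_h,\vech v_h)|_{\rm j,\tau}.
\]
I emphasize that this cancellation is special to the fully discrete (coercivity) setting; for the mixed arguments $\vec\xi$ of Lemma \ref{bdd-ah} the normal derivative is no longer a degree-$k$ polynomial, the analogous term survives via \eqref{ineq1}, and this is exactly why boundedness retains a $|\vec\xi|_{2,h}$ contribution.

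Finally I would close by Young's inequality and the largeness of $\tau$. Combining the two displays,
\[
 a_h(\vec v_h, \vech v_h; \vec v_h, \vech v_h)
  \ge |\vec v_h|_{1,h}^2 - C\tau^{-1/2}|\vec v_h|_{1,h}\,|(\vec v_h,\vech v_h)|_{\rm j,\tau}
     + |(\vec v_h,\vech v_h)|_{\rm j,\tau}^2,
\]
and absorbing the middle term as $C\tau^{-1/2}|\vec v_h|_{1,h}|(\vec v_h,\vech v_h)|_{\rm j,\tau} \le \tfrac12|\vec v_h|_{1,h}^2 + \tfrac{C^2}{2\tau}|(\vec v_h,\vech v_h)|_{\rm j,\tau}^2$ yields, for $\tau$ sufficiently large (e.g. $\tau \ge C^2$), the estimate $a_h(\vec v_h,\vech v_h;\vec v_h,\vech v_h) \ge \tfrac12\enorm{(\vec v_h,\vech v_h)}_{h,\tau}^2$. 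The norm equivalence \eqref{en-equiv} then promotes this to $a_h(\vec v_h,\vech v_h;\vec v_h,\vech v_h) \ge C\enorm{(\vec v_h,\vech v_h)}_\tau^2$, which is \eqref{coer}; and since $\tau \ge 1$ gives $\enorm{\cdot}_\tau \ge \enorm{\cdot}$, the parameter-free bound \eqref{coer-tau} follows immediately. The sole real obstacle is the orthogonality identity of the second paragraph, without which a fixed-size trace constant would obstruct coercivity no matter how large $\tau$ is chosen; everything after it is routine Young-type absorption.
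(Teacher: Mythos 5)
Your proof is correct and follows essentially the same route as the paper's: the crux in both is the identity $\ang{\ngrad \vec v_h, \vech v_h - \vec v_h}_{\dT_h} = \ang{\ngrad \vec v_h, \Proj_{k}(\vech v_h - \vec v_h)}_{\dT_h}$ (valid since $\ngrad\vec v_h$ has degree $\le k$ on each edge), followed by the trace/inverse inequalities, Young absorption for $\tau$ large, coercivity in $\enorm{\cdot}_{h,\tau}$, and promotion to $\enorm{\cdot}_\tau$ via the inverse-inequality norm equivalence. Your write-up even spells out the orthogonality justification of the key identity in more detail than the paper does, and your derivation of \eqref{coer-tau} from \eqref{coer} using $\tau \ge 1$ matches the paper's ``in particular.''
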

\begin{proof}
   We note that
   \begin{equation}\label{coer1} 
    \begin{aligned} 
      \ang{\ngrad{\vec v_h}, \vech v_h - \vec v_h}_{\dT_h}
      = \ang{\ngrad{\vec v_h}, \Proj_{k}(\vech v_h - \vec v_h)}_{\dT_h}.
    \end{aligned}
    \end{equation} 
    Then it follows that
    \begin{equation}\label{coer2} 
     \begin{aligned} 
     a_h(\vec v_h, \vech v_h; \vec v_h, \vech v_h) 
     \ge |\vec v_h|^2_{1,h} - 2 |\ang{\ngrad{\vec v_h}, \Proj_{k}(\vech v_h - \vec v_h)}_{\dT_h}|  
       + | (\vec v_h, \vech v_h)|_{\rm j, \tau}^2.
     \end{aligned}
     \end{equation} 
     By the trace and inverse inequalities and Young's inequality, we have
     \begin{align} \label{coer3}
     2 |\ang{\ngrad{\vec v_h}, \Proj_{k}(\vech v_h - \vec v_h)}_{\dT_h}|
     & \le
     C\left(
      \varepsilon |\vec v_h|^2_{1,h} + \varepsilon^{-1} \tau^{-1}|(\vec v_h, \vech v_h)|_{\rm j, \tau}^2
     \right)
     \end{align} 
         for any $\varepsilon > 0$.
     From \eqref{coer2} and \eqref{coer3}, it follows that
    \begin{equation}\label{coer4} 
     \begin{aligned} 
     a_h(\vec v_h, \vech v_h; \vec v_h, \vech v_h) \ge 
      (1-C\varepsilon)|\vec v_h|_{1,h}^2 
      + (1 - C\varepsilon^{-1}\tau^{-1}) 
      |(\vec v_h, \vech v_h)|_{\rm j, \tau}^2.
     \end{aligned}
    \end{equation} 
     We can take
     $\varepsilon = \tau^{-1/2}$ and deduce that, by assuming $\tau \ge 4C^2$,  
     \[
         a_h(\vec v_h, \vech v_h; \vec v_h, \vech v_h) \ge 
          \left(1 - C \tau^{-1/2}\right) \enorm{(\vec v_h, \vech v_h)}_{h,\tau}^2
          \ge \frac 1 2 \enorm{(\vec v_h, \vech v_h)}_{h,\tau}^2.
     \]
     By the inverse inequality, we have
      $\enorm{(\vec v_h, \vech v_h)}_\tau \le C' \enorm{(\vec v_h, \vech v_h)}_{h,\tau}$
       for some positive constant $C'$,
       which completes the proof.  
 \end{proof}

 \subsection{The discrete inf-sup condition}
 To prove the discrete inf-sup condition of the bilinear form $b_h$,
 we introduce a Fortin operator. The main idea and techniques 
 for constructing the Fortin operator
 are entirely based on \cite{EgWa2013}. 
 The global $L^2$-projection operators $\vec \Pi_h^k : \vec H^1_0(\Omega) \to \vec V_h^k$  and 
 $\vech \Pi_h^k : \vec H^{1}(\Omega) \to  \vech V_h^{k}$ 
 are defined by
 \begin{align*}
  (\vec \Pi_h^k \vec v)|_K &= \vec\Pi_K^{k} (\vec v|_K)
  \text{ for } K \in \T_h, \\
  (\vech \Pi_h^{k} \vec v)|_e &= \vech\Pi_e^{k} (\vec v|_e)
  \text{ for } e \in \E_h, 
 \end{align*}
 where $\vec\Pi_K^{k}$ and
 $\vech\Pi_e^k$ are the $L^2$-projections onto 
 $\vec P_k(K)$ and $\vec P_{k}(e)$, respectively. 
 We define the Fortin operator by
 \begin{align*}
  (\vec \Pi_h^{k+1}, \vech \Pi_h^{k}) : \vec H_0^1(\Omega)
  \to \vec V_h^{k+1} \times \vech V_h^{k}.
 \end{align*}
 In the following, we show this operator satisfies the Fortin properties.
   
\begin{lemma}
 For all $\vec v \in \vec H_0^1(\Omega)$,
 we have
 \begin{equation} \label{fortin1}
  b_h(\vec\Pi_h^{k+1} \vec v, \vech\Pi_h^{k} \vec v; q_h) = -(\dive \vec v, q_h)_\Omega \quad \forall q_h \in Q_h^{k}.
 \end{equation} 
 Moreover, there exists a constant $C>0$ such that
 \begin{equation} \label{fortin2}
  \enorm{(\vec\Pi_h^{k+1}\vec v, \vech\Pi_h^{k} \vec v)}_h \le C |\vec v|_{1,h}
  \quad \forall \vec v \in \vec H_0^1(\Omega).
 \end{equation} 
\end{lemma}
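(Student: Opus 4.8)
The plan is to prove the two properties separately: the commuting relation \eqref{fortin1} by an element-wise integration by parts, and the stability bound \eqref{fortin2} by combining the $H^1$-stability of the local $L^2$-projections with a scaled trace inequality. For \eqref{fortin1}, I would begin from the definition of $b_h$ and integrate the volume term by parts on each $K$,
\[
 -(\dive \vec\Pi_h^{k+1}\vec v, q_h)_{\T_h}
  = (\vec\Pi_h^{k+1}\vec v, \nabla q_h)_{\T_h}
   - \ang{\vec\Pi_h^{k+1}\vec v, q_h\vec n}_{\dT_h}.
\]
Writing the facet term of $b_h$ as $\ang{\vech\Pi_h^{k}\vec v, q_h\vec n}_{\dT_h} - \ang{\vec\Pi_h^{k+1}\vec v, q_h\vec n}_{\dT_h}$, the two occurrences of $\ang{\vec\Pi_h^{k+1}\vec v, q_h\vec n}_{\dT_h}$ cancel, leaving
\[
 b_h(\vec\Pi_h^{k+1}\vec v,\vech\Pi_h^{k}\vec v;q_h)
 = (\vec\Pi_h^{k+1}\vec v,\nabla q_h)_{\T_h}
  - \ang{\vech\Pi_h^{k}\vec v, q_h\vec n}_{\dT_h}.
\]

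The crux is then two exactness observations dictated by the degree choices. Since $q_h\in Q_h^{k}$ is element-wise of degree $k$, $\nabla q_h$ is element-wise of degree $k-1$, hence in $\vec P_{k+1}(K)$, so the defining property of the volume projection gives $(\vec\Pi_h^{k+1}\vec v,\nabla q_h)_{\T_h}=(\vec v,\nabla q_h)_{\T_h}$. Likewise, on each straight facet $e$ the normal $\vec n$ is constant and $q_h|_e\in P_k(e)$, so $q_h\vec n\in\vec P_k(e)$ is admissible for the facet projection and $\ang{\vech\Pi_h^{k}\vec v, q_h\vec n}_{\dT_h}=\ang{\vec v, q_h\vec n}_{\dT_h}$. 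Integrating by parts backwards on each $K$ — legitimate because $\vec v\in\vec H^1(\Omega)$ — then recovers $-(\dive\vec v,q_h)_{\T_h}$, and this equals $-(\dive\vec v,q_h)_\Omega$ since $\dive\vec v\in L^2(\Omega)$. Keeping every manipulation element-wise avoids any bookkeeping with the double-valued interior traces of $q_h$ or with $\partial\Omega$.

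For \eqref{fortin2}, recall $\enorm{(\cdot,\cdot)}_h^2=|\cdot|_{1,h}^2+|(\cdot,\cdot)|_{\rm j}^2$ and bound the two pieces. The seminorm is handled by the $H^1$-stability of $\vec\Pi_K^{k+1}$: subtracting the element mean $\bar{\vec v}$ (which is reproduced), the inverse inequality, $L^2$-contractivity, and Poincaré on $K$ give $|\vec\Pi_K^{k+1}\vec v|_{1,K}\le Ch_K^{-1}\|\vec v-\bar{\vec v}\|_{0,K}\le C|\vec v|_{1,K}$. For the jump term, on each edge one has $\Proj_{k}(\vech\Pi_h^{k}\vec v-\vec\Pi_h^{k+1}\vec v)=\vech\Pi_e^{k}\big((\vec v-\vec\Pi_h^{k+1}\vec v)|_e\big)$, because $\vech\Pi_h^{k}\vec v$ already lies in $\vec P_k(e)$. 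Using $L^2(e)$-contractivity of the edge projection, the scaled trace inequality, the approximation bound $\|\vec v-\vec\Pi_K^{k+1}\vec v\|_{0,K}\le Ch_K|\vec v|_{1,K}$ together with the $H^1$-stability just proved, and $h_e\sim h_K$, each facet contribution is bounded by $C|\vec v|_{1,K}^2$; summing over the boundedly many facets per element yields $|(\cdot,\cdot)|_{\rm j}^2\le C|\vec v|_{1,h}^2$, and \eqref{fortin2} follows.

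I expect the main obstacle to be pinning down \eqref{fortin1} rigorously, since it relies on the two exactness identities holding \emph{simultaneously}. This is possible precisely because the velocity unknown has degree $k+1$ while the facet unknown has degree $k$: the gradient $\nabla q_h$ has degree $k-1\le k+1$ so the volume projection reproduces $\vec v$ against it, while $q_h\vec n$ has degree $k$ on each facet so the facet projection reproduces $\vec v$ against it. The points to check carefully are the constancy of $\vec n$ per facet (needed so that $q_h\vec n\in\vec P_k(e)$) and the cancellation of the $\vec\Pi_h^{k+1}\vec v$ facet terms; both are exactly where the mismatched polynomial degrees of the reduced method do the work.
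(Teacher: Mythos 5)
Your proof is correct and follows essentially the same route as the paper: element-wise Green's formula to cancel the $\vec\Pi_h^{k+1}\vec v$ facet terms, then the reproduction properties of the volume projection against $\nabla q_h \in \vec P_{k-1}(K)$ and of the facet projection against $q_h\vec n \in \vec P_k(e)$, followed by integrating back; and for the stability bound, $H^1$-stability of the local $L^2$-projection plus the identity $\Proj_k(\vech\Pi_h^k\vec v - \vec\Pi_h^{k+1}\vec v) = \Proj_k(\vec v - \vec\Pi_h^{k+1}\vec v)$ combined with contractivity and the scaled trace inequality. You simply fill in the standard details (inverse/Poincar\'e arguments, $h_e\sim h_K$) that the paper leaves implicit.
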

\begin{proof}
 First, we prove \eqref{fortin1}.
 Using the Green formula and the property of $L^2$ projection, we have
 \begin{equation}\label{fortin3} 
 \begin{aligned} 
   b_h(\vec\Pi_h^{k+1} \vec v, \vech\Pi_h^{k} \vec v; q_h)
   &= -(\dive \vec\Pi_h^{k+1} \vec v, q_h)_{\T_h} 
      - \ang{\vech\Pi_h^{k} \vec v- \vec\Pi_h^{k+1}\vec v, q_h \vec n}_{\dT_h} \\
   &= (\vec\Pi_h^{k+1} \vec v, \nabla q_h)_{\T_h}
      - \ang{\vech\Pi_h^{k} \vec v, q_h \vec n}_{\dT_h} \\
   &= ( \vec v, \nabla q_h)_{\T_h}
      - \ang{ \vec v, q_h \vec n}_{\dT_h} \\      
   &= -(\dive \vec v, q_h)_{\Omega}. 
 \end{aligned}
\end{equation} 
 Next, we prove \eqref{fortin2}.
 Note that $| \vec\Pi_h^{k+1} \vec v|_{1,h} \le |\vec v|_{1,h}$ and 
 \begin{equation}\label{fortin5} 
  \begin{aligned} 
   h_e^{-1/2}\|\Proj_{k}(\vech\Pi_h^{k}\vec v-\vec\Pi_h^{k+1} \vec v)\|_{0,e}
      &\le   h_e^{-1/2}\|\vec v -\vec\Pi_h^{k+1} \vec v\|_{0,e}\\
      &\le   C| \vec v|_{1,h}.
  \end{aligned}
 \end{equation} 
 From \eqref{fortin3} and \eqref{fortin5}, it follows that 
 \begin{equation}\label{fortin6} 
 \begin{aligned} 
  \enorm{(\vec\Pi_h^{k+1} \vec v, \vech\Pi_h^{k} \vec v)}_h^2
  &= | \vec\Pi_h^{k+1}\vec v|^2_{1,h}
    + |(\vec\Pi_h^{k+1}  \vec v, \vech\Pi_h^{k} \vec v)|_{\rm j}^2 
  \le C|\vec v|_{1,h}^2,
 \end{aligned}
 \end{equation} 
 which completes the proof.   
\end{proof}
   
By using the above results, we can prove the discrete inf-sup 
 condition for the bilinear form $b_h$.

\begin{lemma}[Discrete inf-sup condition]
 There exists a constant $\beta>0$ independent of $h$ such that 
 \begin{equation}\label{dinfsup}
 \begin{aligned} 
  \sup_{(\vec v_h,  \vech v_h) \in \V\times \hV}
   \frac{b_h(\vec v_h, \vech v_h; q_h)}
        {\enorm{(\vec v_h, \vech v_h)}} 
   \ge \beta \|q_h\| \quad \forall q_h \in Q_h^{k}.
 \end{aligned}
 \end{equation} 
\end{lemma}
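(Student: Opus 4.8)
The plan is to prove the discrete inf-sup condition by the standard Fortin-operator argument, leveraging the two properties of the Fortin operator $(\vec\Pi_h^{k+1}, \vech\Pi_h^k)$ established in the previous lemma together with the continuous inf-sup condition for the Stokes problem. The key idea is that the discrete inf-sup condition follows from the continuous one once we have a bounded right-inverse of the divergence that interacts correctly with $b_h$ through the projection.

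First I would recall the continuous inf-sup condition: since $\Omega$ is a bounded (convex polygonal/polyhedral) domain, for each $q_h \in Q_h^k \subset L^2_0(\Omega)$ there exists $\vec v \in \vec H_0^1(\Omega)$ with $-(\dive \vec v, q_h)_\Omega = \|q_h\|^2$ (equivalently $\dive \vec v = -q_h$) and $|\vec v|_{1,h} = |\vec v|_{1,\Omega} \le C_\Omega \|q_h\|$. This is the classical surjectivity of the divergence from $\vec H_0^1$ onto $L^2_0$. Next I would apply the Fortin operator to this $\vec v$, setting $(\vec v_h, \vech v_h) = (\vec\Pi_h^{k+1}\vec v, \vech\Pi_h^k \vec v) \in \V \times \hV$. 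By the commuting property \eqref{fortin1}, we have
\begin{equation*}
 b_h(\vec v_h, \vech v_h; q_h) = -(\dive \vec v, q_h)_\Omega = \|q_h\|^2,
\end{equation*}
and by the stability estimate \eqref{fortin2} together with the energy-norm equivalence \eqref{en-equiv},
\begin{equation*}
 \enorm{(\vec v_h, \vech v_h)} \le C \enorm{(\vec v_h, \vech v_h)}_h \le C |\vec v|_{1,h} \le C C_\Omega \|q_h\|.
\end{equation*}

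Combining these two displays gives the bound
\begin{equation*}
 \frac{b_h(\vec v_h, \vech v_h; q_h)}{\enorm{(\vec v_h, \vech v_h)}} \ge \frac{\|q_h\|^2}{C C_\Omega \|q_h\|} = \frac{1}{C C_\Omega} \|q_h\|,
\end{equation*}
so taking the supremum over the discrete space yields \eqref{dinfsup} with $\beta = (C C_\Omega)^{-1}$; one must also handle the trivial case $q_h = 0$ separately, where the inequality is immediate. The main obstacle, and the only genuinely nontrivial input, is the verification that the Fortin operator satisfies exactly the commuting identity \eqref{fortin1} and the boundedness \eqref{fortin2} — but these are supplied by the preceding lemma, so here the argument reduces to assembling the pieces. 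I would emphasize that the crucial feature making \eqref{fortin1} hold is the use of polynomials of degree $k+1$ for the element velocity against the degree-$k$ pressure: the Green-formula manipulation in \eqref{fortin3} exploits that $\nabla q_h \in \vec P_{k-1}(\T_h) \subset \vec P_{k+1}(\T_h)$ and that the edge traces are tested against $q_h \vec n$ with $q_h|_e \in P_k(e)$, both of which are preserved under the respective $L^2$-projections. This degree matching is precisely what the reduced method is designed to exploit, and it is why the stability constant $\beta$ can be taken independent of both $h$ and $\tau$.
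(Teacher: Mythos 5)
Your proof is correct and takes essentially the same approach as the paper: both arguments apply the Fortin operator $(\vec\Pi_h^{k+1},\vech\Pi_h^{k})$ from the preceding lemma, using \eqref{fortin1} to reduce the discrete bilinear form to the continuous divergence pairing and \eqref{fortin2} together with the norm equivalence \eqref{en-equiv} to control the energy norm by $|\vec v|_{1,h}\le C\|q_h\|$. The only cosmetic difference is that you invoke the continuous inf-sup condition in its divergence-surjectivity form (choosing a specific $\vec v$ with $\dive\vec v=-q_h$), whereas the paper keeps the supremum over $\vec H_0^1(\Omega)$ throughout; these formulations are equivalent.
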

\begin{proof}
 It is well-known that the continuous inf-sup condition holds: 
 there exists $\beta'>0$ such that
 \begin{equation} \label{cinfsup}
  \sup_{\vec v  \in \vec H_0^1(\Omega) }
  \frac{(\dive\vec v, q)_\Omega}
       {|\vec v|_{1,h}} 
  \ge \beta' \|q\| \quad \forall q \in L_0^2(\Omega).    
 \end{equation} 
 For all $q_h \in Q_h^{k} \subset L_0^2(\Omega)$, we have  
 \begin{equation*}
 \begin{aligned} 
  \sup_{(\vec v_h, \vech v_h) \in \V \times \hV}
  \frac{b_h(\vec v_h, \vech v_h; q_h)}
       {\enorm{(\vec v_h, \vech v_h)}_h} 
  &\ge 
  \sup_{\vec v \in \vec H_0^1(\Omega)}
  \frac{b_h(\vec\Pi_h^{k+1}\vec v, \vech\Pi_h^{k} \vec v; q_h)}
       {\enorm{(\vec\Pi_h^{k+1}\vec v, \vech\Pi_h^{k} \vec v)}_h} \\
  &=\sup_{\vec v \in \vec H_0^1(\Omega)}
    \frac{(\dive \vec v, q_h)_\Omega}
         {\enorm{(\vec\Pi_h^{k+1}\vec v, \vech\Pi_h^{k} \vec v)}_h} \\
  & \ge 
    C\sup_{\vec v \in \vec H_0^1(\Omega)}
     \frac{(\dive \vec v, q_h)_\Omega}
          {|\vec v|_{1,h}} \\
  &  \ge \beta' \|q_h\|.               
 \end{aligned}
 \end{equation*}
 The proof is complete.   
\end{proof}

\subsection{A priori error estimates}
 We prove optimal error estimates
 by using the results in the previous section.
 In this section, the stabilization parameter $\tau$ is fixed to be a sufficiently large value.
\begin{theorem}[Energy-norm error estimate] \label{thm:enorm}
 Let $(\vec u, p) \in \vec H_0^1(\Omega) \times L^2_0(\Omega)$ 
 be the exact solution of the Stokes equations \eqref{stokes},
 and let $(\vec u_h, \vech u_h,p_h) \in \V \times \hV 
 \times Q_h^{k}$
 be the solution of the method \eqref{rhdg}. 
 Then we have
 \begin{equation}\label{en-error}
 \begin{aligned}
  &\enorm{(\vec u-\vec u_h,\vec u|_{\Gamma_h}-\vech u_h) }
   + \|p - p_h\|  \\
  & \qquad \le C \left(\inf_{(\vec v_h, \vech v_h)  \in \V \times \hV}
     \enorm{(\vec u - \vec v_h,\vec u|_{\Gamma_h}-\vech v_h) } 
     +  \inf_{q_h \in Q_h^{k}}\| p - q_h\| \right). 
 \end{aligned}
 \end{equation}
 If $(\vec u, p) \in \vec H^{k+2}(\Omega) \times H^{k+1}(\Omega)$,
 we obtain
 \begin{equation} \label{en-error-opt}
    \enorm{(\vec u-\vec u_h,\vec u|_{\Gamma_h}-\vech u_h)}
    + \|p - p_h\| \le  C h^{k+1} \left( |\vec u|_{\vec H^{k+2}(\Omega)} + \| p \|_{H^{k+1}(\Omega)}\right).
 \end{equation}
\end{theorem}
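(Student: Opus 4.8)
The plan is to carry out the classical saddle-point (Brezzi) error analysis, adapted to the hybridized spaces, from the four structural facts already in hand: the Galerkin orthogonality \eqref{gorth}, the coercivity of $a_h$ (Lemma \ref{coer-ah}), the boundedness of $a_h$ and $b_h$ (Lemmas \ref{bdd-ah} and \ref{bdd-bh}), and the discrete inf-sup condition \eqref{dinfsup}. Since $\tau$ is fixed, the $\tau$-dependent constant in \eqref{bdd-a} is harmless. Fix arbitrary $(\vec v_h,\vech v_h)\in\V\times\hV$ and $q_h\in Q_h^{k}$, and introduce the discrete quantities $(\vec\phi_h,\vech\phi_h)=(\vec u_h-\vec v_h,\vech u_h-\vech v_h)$ and $\psi_h=p_h-q_h$, which belong to the discrete spaces. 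Once $\enorm{(\vec\phi_h,\vech\phi_h)}$ and $\|\psi_h\|$ are controlled by the approximation errors, the triangle inequality delivers \eqref{en-error}, and \eqref{en-error-opt} then follows by inserting the approximation lemma of Section~2.

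First I would bound the velocity. By coercivity \eqref{coer-tau}, $C\enorm{(\vec\phi_h,\vech\phi_h)}^2\le a_h(\vec\phi_h,\vech\phi_h;\vec\phi_h,\vech\phi_h)$. Writing $\vec u-\vec u_h=(\vec u-\vec v_h)-\vec\phi_h$, testing the first identity of \eqref{gorth} against $(\vec\phi_h,\vech\phi_h)$, and using the second identity of \eqref{gorth} with $q_h\mapsto\psi_h$ to replace $b_h(\vec\phi_h,\vech\phi_h;\psi_h)$ by $b_h(\vec u-\vec v_h,\vec u|_{\Gamma_h}-\vech v_h;\psi_h)$, I arrive at
\[
\begin{aligned}
 a_h(\vec\phi_h,\vech\phi_h;\vec\phi_h,\vech\phi_h)
 &= a_h(\vec u-\vec v_h,\vec u|_{\Gamma_h}-\vech v_h;\vec\phi_h,\vech\phi_h) \\
 &\quad + b_h(\vec\phi_h,\vech\phi_h;p-q_h)
 - b_h(\vec u-\vec v_h,\vec u|_{\Gamma_h}-\vech v_h;\psi_h).
\end{aligned}
\]
Estimating the three right-hand terms by Lemmas \ref{bdd-ah} and \ref{bdd-bh} --- using \eqref{bdd-b-qh} for the last term, as $\psi_h\in Q_h^{k}$ --- and absorbing one factor of $\enorm{(\vec\phi_h,\vech\phi_h)}$ through Young's inequality gives, for any $\delta>0$,
\[
 \enorm{(\vec\phi_h,\vech\phi_h)}^2
 \le C\bigl(\enorm{(\vec u-\vec v_h,\vec u|_{\Gamma_h}-\vech v_h)}^2+\|p-q_h\|_{h}^2\bigr)
 + C\delta\,\|\psi_h\|^2.
\]

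Next I would bound the pressure via the discrete inf-sup condition \eqref{dinfsup}. For any $(\vec w_h,\vech w_h)\in\V\times\hV$, testing the first identity of \eqref{gorth} and splitting $p-p_h=(p-q_h)-\psi_h$ yields $b_h(\vec w_h,\vech w_h;\psi_h)=a_h(\vec u-\vec u_h,\vec u|_{\Gamma_h}-\vech u_h;\vec w_h,\vech w_h)+b_h(\vec w_h,\vech w_h;p-q_h)$. Bounding the right-hand side by Lemmas \ref{bdd-ah} and \ref{bdd-bh}, dividing by $\enorm{(\vec w_h,\vech w_h)}$, and taking the supremum, \eqref{dinfsup} gives $\beta\|\psi_h\|\le C(\enorm{(\vec u-\vec u_h,\vec u|_{\Gamma_h}-\vech u_h)}+\|p-q_h\|_{h})$. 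Inserting $\enorm{(\vec u-\vec u_h,\vec u|_{\Gamma_h}-\vech u_h)}\le\enorm{(\vec u-\vec v_h,\vec u|_{\Gamma_h}-\vech v_h)}+\enorm{(\vec\phi_h,\vech\phi_h)}$ and feeding the velocity bound of the previous step back in, I then choose $\delta$ small enough to absorb both $\|\psi_h\|^2$ and $\enorm{(\vec\phi_h,\vech\phi_h)}^2$ into their left-hand sides. This closes the coupled system and produces \eqref{en-error}.

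The main obstacle I expect is precisely this coupling: the cross terms $b_h(\cdot\,;\psi_h)$ and $a_h(\vec u-\vec u_h;\cdot)$ entangle the velocity and pressure errors, so the two bounds cannot be obtained in isolation and must be combined and closed by a careful absorption argument with a sufficiently small Young parameter. A secondary point to track is that the boundedness of $b_h$ naturally produces the stronger norm $\|p-q_h\|_{h}$ rather than $\|p-q_h\|$; choosing $q_h$ as the $L^2$-projection of $p$ (so that the volume term in $b_h(\vec\phi_h,\vech\phi_h;p-q_h)$ vanishes by orthogonality against $\dive\vec\phi_h\in P_k(\T_h)$) and invoking standard approximation bounds controls $\|p-q_h\|_{h}$ at order $h^{k+1}$, which matches \eqref{en-error-opt} and leaves the stated rate intact.
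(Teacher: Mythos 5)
Your proposal is correct and follows essentially the same route as the paper: split the error into a discrete part $(\vec u_h-\vec v_h,\vech u_h-\vech v_h,\,p_h-q_h)$ and an approximation part, bound the discrete velocity error via coercivity plus Galerkin orthogonality, bound the discrete pressure error via the discrete inf-sup condition, and close the coupled estimates by absorption (the paper merely packages the same identities as dual norms of auxiliary functionals $F$ and $G$ before combining them). Your closing remark about $b_h$ producing the stronger norm $\|p-q_h\|_h$, and resolving this by taking $q_h$ to be the $L^2$-projection so that $(\dive\vec\phi_h,\,p-q_h)_{\T_h}=0$, is in fact slightly more careful than the paper, which writes $\|p-q_h\|$ at that step without comment.
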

\begin{proof}
 Let  $\vec v_h \in \V$,
 $\vech v_h \in \hV$ and $r_h \in Q_h^{k}$ be arbitrary, and
 set $\vec \eta_h = \vec u_h - \vec v_h$, 
 $\vech \eta_h = \vech u_h - \vech v_h$ and $\delta_h = p_h -q_h$.
 Then we have
 \begin{equation}\label{a} 
 \begin{aligned} 
  a_h(\vec \eta_h, \vech \eta_h; \vec w_h, \vech w_h)    
   +  b_h(\vec w_h, \vech w_h;  \delta_h)  &= F(\vec w_h,\vech w_h)
  & \forall& (\vec w_h, \vech w_h)   \in \V \times \hV,   \\
  b_h(\vec \eta_h, \vech \eta_h; r_h) &= G(r_h) &\forall& r_h \in Q_h^{k},
 \end{aligned}
 \end{equation} 
 where $F :\V \times \hV \longrightarrow \mathbb R$ and
 $G : Q_h^{k} \longrightarrow \mathbb R$ are defined by
 \begin{align*}
  F(\vec w_h,\vech w_h) &= a_h(\vec u - \vec v_h, \vec u|_{\Gamma_h} - \vech v_h; \vec w_h, \vech w_h)    
  +  b_h(\vec w_h, \vech w_h;  p - q_h),\\
  G(r_h) &=   b_h(\vec u - \vec v_h, \vec u|_{\Gamma_h}- \vech u_h, r_h).
 \end{align*}
 By the boundedness of $a_h$ and $b_h$, we can estimate the dual norms of $F$ and $G$ as follows:
 \begin{equation}\label{b} 
  \begin{aligned} 
   \| F \| &= \sup_{(\vec w_h, \vech w_h) 
   \in \V \times \hV}
        \frac{|F(\vec w_h, \vech w_h)|}
             {\enorm{(\vec w_h, \vech w_h)}}\\
   & \le
     C(\enorm{(\vec u-\vec v_h, \vec u|_{\Gamma_h} - \vech v_h)}
       + \|p - q_h\|),\\
   \|G\| 
   &=   \sup_{r_h \in Q_h^{k}} \frac{|G(r_h)|}{\|r_h\|} 
             \le C\enorm{(\vec u-\vec v_h, \vec u|_{\Gamma_h} - \vech v_h)}.
 \end{aligned}
 \end{equation} 
 By the coercivity \eqref{coer}, we have
 \begin{equation}\label{esti-eta} 
 \begin{aligned} 
   C \enorm{(\vec\eta_h, \vech\eta_h)}^2
   &\le a_h(\vec\eta_h, \vech\eta_h; \vec\eta_h, \vech\eta_h) \\
    &= F(\vec\eta_h, \vech\eta_h) - G(\delta_h) \\
    &\le \|F\|\enorm{(\vec\eta_h, \vech\eta_h)} + \|G\|\|\delta_h\|.
    \end{aligned}
    \end{equation} 
   By the discrete inf-sup condition, we deduce that
   \begin{equation}\label{esti-delta} 
   \begin{aligned}
    \beta\|\delta_h\| 
    &\le \sup_{(\vec w_h,\vech w_h) \in \V \times \hV }
    \frac{b(\vec w_h,\vech w_h; \delta_h)}
    {\enorm{(\vec w_h, \vech w_h)}}\\
    &\le \sup_{(\vec w_h,\vech w_h) \in \V \times \hV }
        \frac{F(\vec w_h,\vech w_h)-a_h(\vec \eta_h, \vech \eta_h; \vec w_h, \vech w_h) }
        {\enorm{(\vec w_h, \vech w_h)}} \\
    &\le \|F\| + C\enorm{(\vec \eta_h, \vech \eta_h)}.
 \end{aligned}
 \end{equation} 
 From \eqref{esti-eta} and \eqref{esti-delta}, it follows that
 \[
  \enorm{(\vec\eta_h, \vech\eta_h)} + 
  \|\delta_h\| 
  \le C\left(\enorm{(\vec u-\vec v_h, \vec u|_{\Gamma_h} - \vech v_h)}
   + \|p - q_h\|\right).
 \]
 By the triangle inequality, we obtain the estimate \eqref{en-error}.  
 In addition, using the approximation property, we see that \eqref{en-error-opt} holds.  
\end{proof}
 
We can prove the $L^2$ error estimate of  optimal order  by the Aubin-Nitsche duality argument.	
\begin{theorem}[$L^2$-error estimate] \label{thm:l2norm}
 Let the notation be the same as in Theorem \ref{thm:enorm}.
 If $(\vec u, p) \in \vec H^{k+2}(\Omega) \times H^{k+1}(\Omega)$, then we have
 \begin{equation} \label{l2err}
   \|\vec u -  \vec u_h\|
   \le Ch^{k+2}\left(|\vec u|_{\vec H^{k+2}(\Omega)} + |p|_{H^{k+1}(\Omega)}\right).
 \end{equation}
\end{theorem}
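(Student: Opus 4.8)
The plan is to prove the $L^2$-error estimate by the Aubin--Nitsche duality argument, using the energy-norm estimate \eqref{en-error-opt} already established in Theorem \ref{thm:enorm}. First I would introduce the dual Stokes problem: given the error $\vec e = \vec u - \vec u_h$ as data, find $(\vec\phi,\psi)$ satisfying $-\Delta\vec\phi + \nabla\psi = \vec e$, $\dive\vec\phi = 0$ in $\Omega$, $\vec\phi = \vec 0$ on $\rd\Omega$. Since $\Omega$ is convex, elliptic regularity yields $(\vec\phi,\psi)\in \vec H^2(\Omega)\times H^1(\Omega)$ together with the a priori bound
\begin{equation*}
 |\vec\phi|_{\vec H^2(\Omega)} + |\psi|_{H^1(\Omega)} \le C\|\vec e\|.
\end{equation*}
The aim is to write $\|\vec e\|^2 = (\vec e,\vec e)_\Omega$ using the dual problem and then exploit Galerkin orthogonality.

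The key steps, in order, are as follows. I would test the dual equations against $\vec e$, integrate by parts elementwise, and reorganize the boundary terms so that $\|\vec e\|^2$ is expressed through the bilinear forms as
\begin{equation*}
 \|\vec e\|^2 = a_h(\vec e,\vec u|_{\Gamma_h}-\vech u_h;\vec\phi,\vec\phi|_{\Gamma_h}) + b_h(\vec\phi,\vec\phi|_{\Gamma_h};p-p_h) + b_h(\vec e,\vec u|_{\Gamma_h}-\vech u_h;\psi),
\end{equation*}
using the adjoint consistency \eqref{acons} applied to the dual solution in place of the exact solution, together with $\dive\vec\phi=0$ and the vanishing of the pressure mean. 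Next I would subtract from the dual solution its Fortin/interpolant approximation $(\vec\Pi_h^{k+1}\vec\phi,\vech\Pi_h^{k}\vec\phi)$ and $q_h$ for $\psi$; by the Galerkin orthogonality \eqref{gorth} the interpolants contribute nothing, so each term becomes a product of the primal error with the dual \emph{interpolation} error. I would then bound every factor by the boundedness lemmas (Lemma \ref{bdd-ah} and Lemma \ref{bdd-bh}), obtaining a product of the form
\begin{equation*}
 \|\vec e\|^2 \le C\Big(\enorm{(\vec e,\vec u|_{\Gamma_h}-\vech u_h)} + \|p-p_h\|\Big)\Big(\enorm{(\vec\phi-\vec\Pi_h^{k+1}\vec\phi,\ldots)} + \|\psi - q_h\|\Big).
\end{equation*}
The first factor is $O(h^{k+1})$ by \eqref{en-error-opt}; the dual interpolation error in the second factor is $O(h)$ by the approximation property (Lemma 2.3) combined with the $H^2\times H^1$ regularity of $(\vec\phi,\psi)$, and carries the factor $|\vec\phi|_2 + |\psi|_1 \le C\|\vec e\|$. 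Dividing through by $\|\vec e\|$ then yields the desired $O(h^{k+2})$ rate.

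The main obstacle I anticipate is the bookkeeping of the boundary and facet terms when integrating the dual problem by parts, specifically verifying that the consistency identity for the dual solution holds in exactly the form needed, given that the reduced stabilization uses the projection $\Proj_k$ and the energy norm is the weaker one involving $\Proj_k(\vech v - \vec v)$. One must check that the dual velocity, being exactly continuous across facets with $\vec\phi|_{\Gamma_h}$ single-valued, kills the stabilization contribution (since $\Proj_k(\vec\phi|_{\Gamma_h}-\vec\phi)=\vec 0$), so that the dual data enter only through the consistent parts of $a_h$ and $b_h$; this is precisely what adjoint consistency \eqref{acons} provides. A secondary technical point is that the duality gains only one power of $h$ rather than two, because the dual regularity is merely $H^2\times H^1$ (not $H^{k+2}\times H^{k+1}$), which is why the final rate is $h^{k+2}$ rather than a higher power — this is the expected and optimal outcome for the velocity in $L^2$.
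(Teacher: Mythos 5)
Your proof takes essentially the same approach as the paper's: an Aubin--Nitsche duality argument with the dual Stokes problem, $H^2\times H^1$ regularity on the convex domain, adjoint consistency, Galerkin orthogonality against interpolants of the dual solution, the boundedness of $a_h$ and $b_h$, and the energy-norm estimate \eqref{en-error-opt}, exactly as in Theorem \ref{thm:l2norm}. If anything, your bookkeeping is slightly more complete than the paper's: you carry the pressure-coupling term $b_h(\,\cdot\,;p-p_h)$ through the orthogonality step (it vanishes for the exact dual velocity since $\dive\vec\psi=0$ and $\vec\psi|_{\Gamma_h}=\vec\psi$ on $\Gamma_h$, and after subtracting the interpolant it contributes another $O(h^{k+2})$ term), whereas the paper's displayed chain silently drops it.
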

\begin{proof}
 We consider the following adjoint problem:  find $(\vec \psi,  \pi) \in
  (\vec H^2(\Omega)\cap \vec H^1_0(\Omega))  \times (H^1(\Omega) \cap L_0^2(\Omega))$
 such that 
 \begin{equation*}
 \begin{aligned} 
   -\Delta \vec\psi + \nabla \pi &= \vec u - \vec u_h, \\
   \dive \vec\psi &= 0.
 \end{aligned}
 \end{equation*} 
 Note that $|\vec\psi|_{\vec H^2(\Omega)} + |\pi|_{H^1(\Omega)} \le C\|\vec u - \vec u_h\|$.
 From the adjoint consistency \eqref{acons}, 
 the solution of the  problem satisfies   
 \begin{equation}\label{adjoint-prob} 
 \begin{aligned}
  a_h(\vec v_h, \vech v_h; \vec \psi, \vec \psi|_{\Gamma_h})    
   + b_h(\vec v_h, \vech v_h;  \pi)  &= (\vec u - \vec u_h, \vec v_h)_\Omega
  & \forall& (\vec v_h, \vech v_h) 
   \in \V\times \hV,   \\
   b_h(\vec \psi, \vec \psi|_{\Gamma_h}; q_h) &= 0 
  & \forall& q_h \in Q^{k}_h.
 \end{aligned}
 \end{equation} 
 Let ($\vec \psi_h, \vech \psi_h, \pi_h)$ $ \in \V \times \hV \times Q_h^{k}$  be an approximation to $(\vec \psi, \vec \psi|_{\Gamma_h}, \pi)$ satisfying
 \begin{align*}
  \enorm{(\vec\psi - \vec \psi_h, \vec\psi|_{\Gamma_h} - \vech \psi_h)} \le Ch |\vec\psi|_{\vec H^2(\Omega)}, \quad
  \| \pi - \pi_h\| \le Ch |\pi|_{H^1(\Omega)}.
 \end{align*}
 Taking $\vec v_h = \vec u - \vec u_h$
 and $\vech v_h = \vec u|_{\Gamma_h} - \vech u_h$ in \eqref{adjoint-prob},  
 \begin{equation*}
 \begin{aligned} 
 \|\vec u - \vec u_h\|^2
  & = a_h(\vec u -\vec u_h, \vec u|_{\Gamma_h} - \vech u_h; \vec\psi, \vec \psi|_{\Gamma_h})    
    + b_h(\vec u -\vec u_h, \vec u|_{\Gamma_h} - \vech u_h;  \pi) \\
  & = a_h(\vec u -\vec u_h, \vec u|_{\Gamma_h} - \vech u_h; \vec\psi - \vec\psi_h, \vec \psi|_{\Gamma_h}-\vech\psi_h)    \\
  & \qquad +  b_h(\vec u -\vec u_h, \vec u|_{\Gamma_h} - \vech u_h;  \pi - \pi_h) \\
  & \le C \enorm{(\vec u -\vec u_h, \vec u|_{\Gamma_h} - \vech u_h)}
     (\enorm{(\vec\psi - \vec \psi_h, \vec \psi|_{\Gamma_h}-\vech \psi_h)} + \| \pi - \pi_h\|) \\
  & \le C h^{k+2} \left(|\vec u|_{\vec H^{k+2}(\Omega)}+|p|_{H^{k+1}(\Omega)} \right)\|\vec u - \vec u_h\|.
 \end{aligned}
 \end{equation*} 
 Thus we obtain the assertion.  
\end{proof}
 
\section{Relations with the nonconforming Gauss-Legendre  finite element method}

\subsection{The Gauss-Legendre element}
The approximation space of the $(k+1)$-th  Gauss-Legendre element for velocity is defined by 
\begin{equation}\label{} 
 \begin{aligned} 
   \widetilde{\vec V}_h^{k+1} = \{ \vec v_h \in \vec V_h^{k+1} :
     \jump{\Proj_{k} \vec v_h} = 0 \},
 \end{aligned}
\end{equation} 
 where $\jump{\cdot}$ is a jump operator(see \cite{ABCM2002} for example).
 This space is known as the Crouzeix-Raviart\cite{CrRa1973}$(k=0)$,
 Fortin-Soulie\cite{FoSo1983}$(k=1)$ or Crouzeix-Falk \cite{CrFa1989}($k=2$) element.
 Note that $\tvec v_h \in \tV$ is continuous at the $(k+1)$-th order Gauss-Legendre points, 
and thereby $\Proj_{k} \tvec v_h$ is single-valued on $\Gamma_h$.
   
The nonconforming Gauss-Legendre   finite element method
 reads as:
 find $(\vec u_h^*, p_h^*) \in \widetilde{\vec V}_h^{k+1} \times Q_h^{k}$
  such that
 \begin{subequations}\label{gl} 
 \begin{align} 
  (\nabla \vec u_h^*, \nabla \tvec v_h)_{\T_h} - (\dive \tvec v_h, p_h^*)_{\T_h} &= (\vec f, \tvec v_h)_\Omega 
     &  \forall& \vec v_h \in \tV,
       \label{gl1}
        \\
   (\dive \vec u_h^* , q_h)_{\T_h} &= 0 & \forall& q_h \in Q_h^{k}.
   \label{gl2}
 \end{align}
 \end{subequations}   
 The nonconforming method is well-posed for $k = 0,1,2$.
 For $k \ge 3$, it is the case 
   under some assumption on a mesh, see \cite[Lemma 3.1]{BaSt2007}. 
 We here assume that the method \eqref{gl} is well-posed
 for simplicity.
 In our analysis, we will use the following inf-sup condition for
  the Gauss-Legendre element, see also \cite{BaSt2007}.
\begin{theorem}[Discrete inf-sup condition for the Gauss-Legendre element]
 There exists a constant $\tilde\beta>0$ such that
 \begin{equation}
  \sup_{\tvec v_h \in \tvec V_h^{k+1}}
  \frac{(\dive \tvec v_h, p_h)_{\T_h}}{|\tvec v_h|_{1,h}}
  \ge \tilde \beta \| q_h\|
  \quad \forall q_h \in Q_h^{k}.
 \end{equation}
\end{theorem}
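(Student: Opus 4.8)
The plan is to establish the condition by the classical Fortin-operator technique, combining the continuous inf-sup condition \eqref{cinfsup} with a divergence-preserving, $H^1$-stable interpolation operator onto the Gauss--Legendre space $\tV$. Concretely, I would introduce the canonical interpolation operator $\Pi_h^{\rm gl} : \vec H_0^1(\Omega) \to \tV$ associated with the degrees of freedom of the $(k+1)$-th Gauss--Legendre element, namely the moments of order up to $k$ on each edge $e \in \E_h$ and the interior moments of order up to $k-1$ on each $K \in \T_h$. Since matching the edge moments of order $k$ forces $\Proj_{k}(\Pi_h^{\rm gl}\vec v)$ to be single-valued across $\Gamma_h$ and to vanish on $\rd\Omega$, the operator is well defined as a map into $\tV$; for $k \le 2$ this is the Crouzeix--Raviart, Fortin--Soulie or Crouzeix--Falk element, and for $k \ge 3$ unisolvence is guaranteed by the standing well-posedness assumption (cf.\ \cite[Lemma 3.1]{BaSt2007}).

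The two properties I need are, for all $\vec v \in \vec H_0^1(\Omega)$,
\begin{equation*}
 (\dive \Pi_h^{\rm gl}\vec v, q_h)_{\T_h} = (\dive \vec v, q_h)_\Omega \quad \forall q_h \in Q_h^{k}, \qquad |\Pi_h^{\rm gl}\vec v|_{1,h} \le C |\vec v|_1.
\end{equation*}
The first I would obtain elementwise: by Green's formula, $(\dive \Pi_h^{\rm gl}\vec v, q_h)_K = -(\Pi_h^{\rm gl}\vec v, \nabla q_h)_K + \ang{\Pi_h^{\rm gl}\vec v\cdot\vec n, q_h}_{\dK}$, and since $\nabla q_h|_K \in [P_{k-1}(K)]^d$ and $q_h|_{\dK} \in P_{k}$, the preservation of the interior moments of order $k-1$ and of the edge moments of order $k$ makes both terms coincide with those of $\vec v$; summing over $K$ and using $\vec v \in \vec H_0^1(\Omega)$ gives the claim. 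The $H^1$-stability is the usual scaling/Bramble--Hilbert estimate on the reference element, made uniform by shape-regularity and unisolvence.

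Finally I would chain the estimates exactly as in the proof of the discrete inf-sup condition \eqref{dinfsup}. Given $q_h \in Q_h^{k} \subset L^2_0(\Omega)$, I pick $\vec v \in \vec H_0^1(\Omega)$ from \eqref{cinfsup}, so that
\begin{equation*}
 \sup_{\tvec v_h \in \tV} \frac{(\dive \tvec v_h, q_h)_{\T_h}}{|\tvec v_h|_{1,h}} \ge \frac{(\dive \Pi_h^{\rm gl}\vec v, q_h)_{\T_h}}{|\Pi_h^{\rm gl}\vec v|_{1,h}} = \frac{(\dive \vec v, q_h)_\Omega}{|\Pi_h^{\rm gl}\vec v|_{1,h}} \ge \frac{1}{C}\,\frac{(\dive \vec v, q_h)_\Omega}{|\vec v|_1} \ge \frac{\beta'}{C}\,\|q_h\|,
\end{equation*}
so the assertion holds with $\tilde\beta = \beta'/C$. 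The main obstacle is not this chaining but the construction underlying $\Pi_h^{\rm gl}$: establishing unisolvence of the Gauss--Legendre degrees of freedom (genuinely delicate for $k \ge 3$, where it becomes mesh-dependent) together with a mesh-uniform $H^1$-stability bound. This is precisely the point where I would lean on the standing well-posedness hypothesis and the analysis of \cite{BaSt2007}, rather than attempt a direct construction. I note that a naive attempt to transfer the already-proved HDG inf-sup through the embedding $\tvec v_h \mapsto (\tvec v_h, \Proj_{k}\tvec v_h)$ fails, since the supremum defining \eqref{dinfsup} ranges over all of $\V \times \hV$ and need not be realized in the image of $\tV$; the Gauss--Legendre interpolant is therefore unavoidable.
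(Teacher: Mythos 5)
Your overall strategy (a divergence-preserving, $H^1$-stable Fortin operator into $\tV$ chained with the continuous inf-sup condition \eqref{cinfsup}) is the natural one, but note first that the paper does not prove this theorem at all: it is quoted from \cite{BaSt2007}, consistently with the standing assumption that \eqref{gl} is well-posed. Your attempt to supply an actual proof breaks down at exactly the point that makes the result nontrivial: the ``canonical interpolation operator'' you posit does not exist for $k \ge 1$. Counting per component on a triangle, you prescribe $3(k+1)$ edge moments and $\dim P_{k-1}(K) = k(k+1)/2$ interior moments, i.e.\ $(k+1)(k+6)/2$ conditions, against $\dim P_{k+1}(K) = (k+2)(k+3)/2$ coefficients; the system is over-determined by exactly $k$ conditions per component. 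Worse, the constraints are not even independent: for $k=1$ the quadratic $2-3(\lambda_1^2+\lambda_2^2+\lambda_3^2)$ has vanishing moments of order $0$ and $1$ on all three edges, so the edge-moment map on $P_2(K)$ has a nontrivial kernel and generic edge moments cannot be matched. This is the well-known pathology of the Fortin--Soulie and Crouzeix--Falk elements: contrary to your claim that ``for $k\le 2$ this is the Crouzeix--Raviart, Fortin--Soulie or Crouzeix--Falk element,'' those elements are precisely \emph{not} defined by unisolvent nodal functionals of the type you list, and their interpolation operators require different constructions (a conforming quasi-interpolant corrected by nonconforming bubbles, as in \cite{FoSo1983,CrFa1989,BaSt2007}). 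Only $k=0$ (Crouzeix--Raviart) works as you describe.

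The fallback you offer --- leaning on the standing well-posedness hypothesis --- cannot close this gap. Well-posedness of the finite-dimensional square system \eqref{gl} for each fixed $h$ is equivalent to \emph{some} inf-sup constant $\beta_h>0$ possibly degenerating with $h$; it yields neither local unisolvence of your degrees of freedom nor a mesh-uniform stability bound for an interpolation operator, and invoking it to manufacture the Fortin operator that then ``proves'' a uniform $\tilde\beta$ is circular. A self-contained proof would have to reproduce the constructions of \cite{FoSo1983,CrFa1989,BaSt2007} (e.g.\ the two-step Fortin trick $\vec\Pi = \vec\Pi_1 + \vec\Pi_2(\I-\vec\Pi_1)$ with a conforming Scott--Zhang-type $\vec\Pi_1$ and local bubble-based corrections $\vec\Pi_2$ restoring the divergence moments), or else do what the paper does and cite the result. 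For what it is worth, the parts of your argument surrounding the missing operator are sound: the Green-formula verification of divergence preservation, the final chaining with \eqref{cinfsup}, and the closing observation that the HDG inf-sup condition \eqref{dinfsup} cannot simply be restricted to the image of $\tV$ are all correct.
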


\subsection{A relation between the  reduced-order HDG method 
    with the lowest-order approximation
  and the Crouzeix-Raviart element}
 
 It is known that there are relations between HDG methods
 and the conforming or nonconforming finite element methods
 for Poisson's equation.
 The hybrid part the solution of the embedded discontinuous Galerkin(EDG) method\cite{GCS2007}
 with the continuous $P_1$ element
 is identical to the conforming finite element solution on $\Gamma_h$.
 In \cite{OikawaToAppear}, it was proved that the hybrid part of the solution of the reduced method
 using the discontinuous $P_1$--$P_0$ approximation coincides
 with the nonconforming Crouzeix-Raviart  finite element method
 at the mid-points of edges.
 In this section, we discover a relation between the reduced method
 with the $P_1$--$P_0$/$P_0$ approximation  
 and the nonconforming Crouzeix-Raviart  finite element method.
 
 Let $\widetilde{V}_h^1$ be the usual Crouzeix-Raviart finite element space.
 The Crouzeix-Raivart interpolation operator
 for scalar-valued functions, 
 $\Pi_h^* : L^2(\Gamma_h) \to \widetilde{V}_h^1$, is defined as
 \begin{equation}
    \int_e \Pi_h^*  \hat v ds = \int_e \hat v ds   
    \quad \forall e \in \E_h.
 \end{equation}
 For a vector-valued function 
 $\vech v = (\widehat v_1, \cdots, \widehat v_d)^T \in  \vec L^2(\Gamma_h)$, 
 we set
 \[
   \vec\Pi_h^*\vech v :=  (\Pi_h^* \widehat v_1, \cdots, \Pi_h^* \widehat v_d)^T.
 \]
 We are now in a position to prove the relation. 
\begin{theorem}
 Let $(\vec u_h, \vech u_h, p_h) 
 \in \vec V_h^1 \times \vech V_h^0 \times Q_h^0$
 be the solution of the reduced method \eqref{rhdg} with $k=0$ and
 $(\vec u_h^*, p_h^*) \in \tvec V_h^1 \times Q_h^0$ be
 the solution of the nonconforming Crouzeix-Raviart finite element method. 
 Then we have
 \[
  \vec \Pi_h^* \vech u_h = \vec u_h^*, 
  \quad p_h = p_h^*.
 \]
\end{theorem}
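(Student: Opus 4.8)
The plan is to show that the pair $(\vec\Pi_h^*\vech u_h, p_h)$ built from the reduced-order HDG solution $(\vec u_h,\vech u_h,p_h)$ itself solves the nonconforming Crouzeix--Raviart system \eqref{gl1}--\eqref{gl2}; since that system is well-posed (and, by the coercivity and discrete inf-sup results of Section~4, so is the HDG system), its unique solution must be $(\vec u_h^*,p_h^*)$, and both asserted identities follow at once. Note that the element velocity $\vec u_h$ itself need not coincide with $\vec u_h^*$, which is why only the hybrid part enters. Throughout I would use two elementary facts for $k=0$: first, affine functions are element-wise harmonic, so $\Delta\vec u_h=\Delta\tvec v_h=0$ on each $K$ and element-wise integration by parts converts volume gradient pairings into boundary pairings; second, $\Proj_0$ is the edge-average and the Crouzeix--Raviart interpolant preserves edge averages, so $\Proj_0(\vec\Pi_h^*\vech u_h)=\vech u_h$ on $\Gamma_h$ (recall $\vech u_h$ is already edge-wise constant).

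First I would establish the divergence constraint. Integrating by parts element-wise and using $q_h\in P_0$ (hence $\nabla q_h=0$ and $q_h\vec n$ edge-wise constant), equation \eqref{rhdg2} collapses to $\ang{\vech u_h, q_h\vec n}_{\dT_h}=0$ for all $q_h\in Q_h^0$. The same integration by parts applied to $\vec\Pi_h^*\vech u_h$, together with $\Proj_0(\vec\Pi_h^*\vech u_h)=\vech u_h$, gives $(\dive\vec\Pi_h^*\vech u_h, q_h)_{\T_h}=\ang{\vech u_h, q_h\vec n}_{\dT_h}$, which therefore vanishes. This is exactly \eqref{gl2}.

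The heart of the argument is the momentum equation, and the key step is to test \eqref{rhdg1} with the admissible pair $(\vec v_h,\vech v_h)=(\tvec v_h,\Proj_0\tvec v_h)$ for an arbitrary $\tvec v_h\in\tvec V_h^1$; this is legitimate because $\Proj_0\tvec v_h$ is single-valued on interior edges and vanishes on $\partial\Omega$, which is precisely the defining property $\jump{\Proj_0\tvec v_h}=0$ of $\tvec V_h^1$. Since $\Proj_0(\Proj_0\tvec v_h-\tvec v_h)=0$, and since $\ngrad\vec u_h$ and $p_h\vec n$ are edge-wise constant, the reduced stabilization term, the term $\ang{\ngrad\vec u_h,\Proj_0\tvec v_h-\tvec v_h}_{\dT_h}$, and the boundary pressure term all drop out, leaving
\begin{equation*}
 (\nabla\vec u_h,\nabla\tvec v_h)_{\T_h}+\ang{\ngrad\tvec v_h,\vech u_h-\vec u_h}_{\dT_h}-(\dive\tvec v_h,p_h)_{\T_h}=(\vec f,\tvec v_h)_\Omega.
\end{equation*}
It then remains to prove the identity $(\nabla\vec u_h,\nabla\tvec v_h)_{\T_h}+\ang{\ngrad\tvec v_h,\vech u_h-\vec u_h}_{\dT_h}=(\nabla\vec\Pi_h^*\vech u_h,\nabla\tvec v_h)_{\T_h}$. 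Integrating by parts with $\Delta\tvec v_h=0$ shows that both sides equal $\ang{\vech u_h,\ngrad\tvec v_h}_{\dT_h}$: the left side because the volume term yields $\ang{\vec u_h,\ngrad\tvec v_h}_{\dT_h}$, which combines with $\ang{\ngrad\tvec v_h,\vech u_h-\vec u_h}_{\dT_h}$, and the right side because $\ngrad\tvec v_h$ is edge-wise constant and $\Proj_0(\vec\Pi_h^*\vech u_h)=\vech u_h$. Substituting gives precisely \eqref{gl1} for $(\vec\Pi_h^*\vech u_h,p_h)$.

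The delicate point, and the one I expect to be the main obstacle, is this momentum step: one must choose the test pair $(\tvec v_h,\Proj_0\tvec v_h)$ so that the $\tau$-dependent stabilization and the consistency boundary terms cancel, and then recognize that the surviving hybrid contribution $\ang{\ngrad\tvec v_h,\vech u_h-\vec u_h}_{\dT_h}$ is exactly what is needed to replace the discontinuous gradient $\nabla\vec u_h$ by the Crouzeix--Raviart gradient $\nabla\vec\Pi_h^*\vech u_h$. Everything hinges on the edge-average identity $\Proj_0(\vec\Pi_h^*\vech u_h)=\vech u_h$ and on the element-wise harmonicity of affine functions; once these are secured the computation is routine and, notably, completely independent of $\tau$, which explains why the coincidence is exact rather than merely asymptotic. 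I would then close by invoking uniqueness of the HDG solution and of the Crouzeix--Raviart solution to conclude $\vec\Pi_h^*\vech u_h=\vec u_h^*$ and $p_h=p_h^*$.
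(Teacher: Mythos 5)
Your proposal is correct and is essentially the paper's own argument: the paper tests \eqref{rhdg1} with the pair $(\vec\Pi_h^*\vech v_h,\vech v_h)$ for arbitrary $\vech v_h\in\vech V_h^0$, which is exactly your family $(\tvec v_h,\Proj_{k}\tvec v_h)$ under the mutually inverse bijection between $\vech V_h^0$ and $\tvec V_h^1$, and it likewise kills the stabilization, consistency, and pressure-trace terms via edge-average matching and uses element-wise harmonicity with Green's formula to arrive at the Crouzeix--Raviart system \eqref{gl}, concluding by uniqueness. The only differences are cosmetic (parametrizing tests by $\tvec v_h$ rather than by $\vech v_h$, and phrasing the average-matching as $\Proj_0$-identities rather than vanishing at edge midpoints).
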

\begin{proof}
  Since $\vec\Pi_h^* \vech v_h$ is a piecewise linear polynomial, 
  it follows that, by the Green formula,   
  \begin{align*}
   \ang{\ngrad \vec\Pi_h^* \vech v_h, \vech u_h}_{\dT_h}
   &= \ang{\ngrad \vec\Pi_h^* \vech v_h, \vec\Pi_h^* \vech u_h}_{\dT_h}\\
   &=(\nabla \vec\Pi_h^* \vech v_h, \nabla \vec\Pi_h^* \vech u_h)_{\T_h}.
   \end{align*}
  Choosing $\vec v_h = \vec\Pi_h^* \vech v_h$ in \eqref{rhdg1}
   and noting that $\vech v_h - \vec\Pi_h^* \vech v_h = \vec 0$ 
   at the mid-points of edges, we have
   \begin{equation}\label{cr1} 
    \begin{aligned} 
     (\nabla \vec \Pi_h^* \vech u_h, \nabla \vec\Pi_h^* \vech v_h)_{\T_h}
      - (\dive \vec \Pi_h^* \vech v_h, p_h)_{\T_h} = (\vec f, \vec\Pi_h^* \vech v_h)_\Omega
      \quad \forall \vech v_h \in \vech V_h^0.
    \end{aligned}
   \end{equation} 
   The equation \eqref{rhdg2} can be rewritten as
   \begin{equation}\label{cr2} 
    \begin{aligned} 
    - (\dive \vec u_h, q_h)_{\T_h} - \ang{\vech u_h - \vec u_h, q_h\vec n}_{\dT_h}
    &=  - \ang{\vech u_h , q_h\vec n}_{\dT_h} \\
     &=  - \ang{\vec \Pi_h^*\vech u_h , q_h\vec n}_{\dT_h} \\
     &= - (\dive \vec \Pi_h^* \vech u_h , q_h\vec n)_{\T_h}.
    \end{aligned}
   \end{equation} 
   From \eqref{cr1} and \eqref{cr2},
    we obtain the following equations to determine $\vech u_h$:
   \begin{align}\label{cr3a} 
     (\nabla \vec \Pi_h^* \vech u_h, \nabla \vec\Pi_h^* \vech v_h)_{\T_h}
      - (\dive \vec \Pi_h^* \vech v_h, p_h)_{\T_h} &= (\vec f, \vec\Pi_h^* \vech v_h)_\Omega
      \quad & \forall \vech v_h \in \vech V_h^0, \\
         (\dive \vec \Pi_h^* \vech u_h , q_h\vec n)_{\T_h} &= 0 \quad  &\forall q_h \in Q_h^0,
   \label{cr3b}
   \end{align}
       which is nothing but \eqref{gl} in the case of $k=1$. 
   Therefore we have $\vec \Pi_h^* \vech u_h = \vec u_h^*$ and $p_h = p^*_h$.     
\end{proof}
\begin{remark} From the definition of the Crouzeix-Raviart interpolation,
   we see that 
   \[  \int_e \vech u_h ds 
   =\int_e \vec \Pi_h^* \vech u_h ds 
     = \int_e \vec u_h^* ds,
      \]
       which implies 
    $\vech u_h$ and $\vec u_h^*$ are equal 
     at the mid-point of  $e \in \E_h$.  
\end{remark}
\subsection{The limit case as $\tau$ tends  to infinity}
  We will show that, as the stabilization parameter $\tau$ tends to infinity, 
   the solution of the reduced  method
  converges to the nonconforming Gauss-Legendre   finite element solution
  with rate $O(\tau^{-1})$.
  To do that, we first prove the following key lemma.

 \begin{lemma} \label{keylemma}
     There exists a constant $C>0$ such that, for any $(\vec v_h, \vech v_h) \in \V \times \hV$,
   \begin{equation}
   \inf_{\tvec w_h \in \tV}
   \enorm{(\vec v_h - \tvec w_h, \vech v_h-\Proj_{k}\tvec w_h)}_{h}
   \le C |(\vec v_h, \vech v_h)|_{\rm j}. 
   \end{equation}
 \end{lemma}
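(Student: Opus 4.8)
The plan is to peel off the jump term by an exact algebraic identity, thereby reducing the assertion to a broken-$H^1$ approximation estimate, and then to realize the infimum with an explicit averaging operator mapping $\V$ into the Gauss--Legendre space $\tV$.

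First I would observe that, for \emph{any} $\tvec w_h \in \tV$, the jump seminorm of the shifted pair does not depend on $\tvec w_h$. Indeed, on each $e \subset \dK$ both $\vech v_h$ and $\Proj_k \tvec w_h$ are of degree $k$, so
\begin{align*}
 \Proj_k\big((\vech v_h - \Proj_k\tvec w_h) - (\vec v_h - \tvec w_h)\big)
 &= \vech v_h - \Proj_k\tvec w_h - \Proj_k\vec v_h + \Proj_k\tvec w_h \\
 &= \Proj_k(\vech v_h - \vec v_h),
\end{align*}
and the $\tvec w_h$-contributions cancel; the hypothesis $\tvec w_h \in \tV$ is used only to guarantee that $\vech v_h - \Proj_k\tvec w_h$ stays single-valued on $\Gamma_h$, hence a legitimate hybrid function. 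Consequently
\[
 \enorm{(\vec v_h - \tvec w_h, \vech v_h - \Proj_k\tvec w_h)}_h^2
 = |\vec v_h - \tvec w_h|_{1,h}^2 + |(\vec v_h, \vech v_h)|_{\rm j}^2,
\]
so, the second term being exactly the target bound, it suffices to exhibit one $\tvec w_h \in \tV$ with $|\vec v_h - \tvec w_h|_{1,h} \le C\,|(\vec v_h, \vech v_h)|_{\rm j}$.

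Next I would control the interelement jumps of $\Proj_k\vec v_h$ by the jump seminorm. On an interior edge $e = \dK^+ \cap \dK^-$ the facet value $\vech v_h$ is single-valued, so inserting and removing $\vech v_h$ and applying the triangle inequality gives
\[
 \|\jump{\Proj_k\vec v_h}\|_{0,e}
 \le \|\Proj_k(\vech v_h - \vec v_h|_{K^+})\|_{0,e} + \|\Proj_k(\vech v_h - \vec v_h|_{K^-})\|_{0,e},
\]
while on a boundary edge $\vech v_h = \vec 0$ (with the usual convention that the jump is the trace on $\partial\Omega$) identifies $\Proj_k\vec v_h$ with $\Proj_k(\vech v_h - \vec v_h)$. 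Summing over $\E_h$ then yields $\sum_{e \in \E_h} h_e^{-1}\|\jump{\Proj_k\vec v_h}\|_{0,e}^2 \le C\,|(\vec v_h, \vech v_h)|_{\rm j}^2$, so it remains to bound the broken-$H^1$ distance from $\vec v_h$ to $\tV$ by these jumps alone.

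The heart of the argument is the construction of an averaging operator $\mathsf{A}_h : \V \to \tV$ obeying the local estimate
\[
 |\vec v_h - \mathsf{A}_h\vec v_h|_{1,K}^2 \le C \sum_{e \subset \dK} h_e^{-1}\|\jump{\Proj_k\vec v_h}\|_{0,e}^2 .
\]
I would define $\mathsf{A}_h\vec v_h$ through the degrees of freedom of the Gauss--Legendre element, assigning to each shared edge the average of the two one-sided degree-$k$ moments of $\Proj_k\vec v_h$ and inheriting the interior moments from $\vec v_h$; the difference $\vec v_h - \mathsf{A}_h\vec v_h$ on each $K$ is then a polynomial whose only altered edge moments are the half-jumps, and a scaling argument to the reference element together with equivalence of norms on the finite-dimensional polynomial space converts this into the displayed bound. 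Summing over elements by a finite-overlap patch argument and invoking the previous step closes the proof. I expect this last step to be the main obstacle: one must verify that the averaged edge data are genuinely admissible degrees of freedom of $\tV$, which is transparent for the Crouzeix--Raviart case $k=0$ but delicate for $k \ge 1$, where the Gauss--Legendre/Fortin--Soulie element carries a compatibility relation among its edge moments. The well-posedness (unisolvence) of the element assumed above is precisely what makes the averaging operator well defined, and componentwise the whole construction is the vector-valued analogue of that in \cite{OikawaToAppear}, whose scaling estimates I would invoke.
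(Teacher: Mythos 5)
Your two reduction steps are sound: the projection identity $\Proj_{k}\bigl((\vech v_h - \Proj_{k}\tvec w_h) - (\vec v_h - \tvec w_h)\bigr) = \Proj_{k}(\vech v_h - \vec v_h)$ is exact, so the jump part of the shifted pair is indeed independent of $\tvec w_h$, and the bound on the edge jumps of $\Proj_{k}\vec v_h$ by $|(\vec v_h,\vech v_h)|_{\rm j}$ is fine. The gap is the step you yourself flag, and it is not a technicality: the averaging operator $\mathsf{A}_h\colon \V \to \tV$ does not exist as described for $k\ge 1$, because degree-$k$ edge moments are not unisolvent degrees of freedom for the Gauss--Legendre element. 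Take $k=1$ (Fortin--Soulie): the bubble $b_K = 2 - 3(\lambda_1^2+\lambda_2^2+\lambda_3^2) \in P_2(K)$ has vanishing $P_1$ moments on all three edges, so the moment map $P_2(K) \to \R^6$ has a one-dimensional kernel and its image is a proper subspace, cut out by one linear compatibility relation per element. The one-sided moments of $\Proj_{1}\vec v_h$ do satisfy this relation on each $K$ (they come from $\vec v_h|_K \in \vec P_2(K)$), but their edge-wise averages in general do not: applying $K$'s constraint functional to the averaged data leaves one half of that functional applied to the neighbors' traces, which has no reason to vanish. So in general there is no element of $\tV$ whose edge moments equal your averaged data, and the local estimate at the heart of your proof has no operator to apply it to. The fix you gesture at does not close this: the well-posedness the paper assumes for \eqref{gl} is the discrete Stokes inf-sup condition, not unisolvence of edge-moment DOFs (which genuinely fails for even degree $k+1$), and \cite{OikawaToAppear} treats only the Crouzeix--Raviart case $k=0$, where your construction is indeed unproblematic.

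For comparison, the paper proves the lemma without constructing anything: $|(\cdot,\cdot)|_{\rm j}$ vanishes on $(\vec v_h,\vech v_h)$ exactly when $\vec v_h \in \tV$ and $\vech v_h = \Proj_{k}\vec v_h$, i.e.\ on the subspace $\vec G(\tV) = \{(\tvec w_h, \Proj_{k}\tvec w_h) : \tvec w_h \in \tV\}$, so it defines a norm on the finite-dimensional quotient $(\V\times\hV)/\vec G(\tV)$; the left-hand side of the lemma is precisely the quotient norm of that space, and equivalence of norms on a finite-dimensional space concludes. This soft argument yields a constant that is not obviously uniform in $h$ (harmless for the paper's purpose, the limit $\tau\to\infty$ on a fixed mesh), whereas your route, if the averaging step could be repaired --- e.g.\ by correcting the averaged data to restore the per-element compatibility, with corrections controlled by the jumps, while keeping the data single-valued on every edge --- would give an $h$-uniform constant. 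That repair is a genuine global problem, and it is the missing core of your argument.
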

     
 \begin{proof}
     Let us define $\vec G(\tV) = \{(\tvec v_h, \Proj_{k}\tvec v_h) : 
     \tvec v_h \in \tV\}$ which is a closed subspace of $\V\times \hV$.
    We consider the quotient space $\V\times \hV/\vec G(\tV)$,
    where the standard quotient norm is given by 
    \[
       \| (\vec v_h, \vech v_h) \|_{\V\times \hV/\vec G(\tV)}
       = \inf_{\tvec w_h \in \tV} 
       \enorm{(\vec v_h - \tvec w_h, \vech v_h - \Proj_{k}\tvec w_h)}_h.
    \] 
    We shall prove that $|(\vec v_h, \vech v_h)|_{\rm j}$ is 
    also a norm on $\V\times \hV/\vec G(\tV)$.
    Suppose that $|(\vec v_h, \vech v_h)|_{\rm j} = 0$,
    then we readily have $\vec v_h \in \tV$ and 
    $\vech v_h = \Proj_{k}\vec v_h$.
    Hence it follows that $(\vec v_h, \vech v_h) \in \vec G(\tV)$,
    which implies that $(\vec v_h, \vech v_h)$ is the zero element
     of $\V\times \hV/\vec G(\tV)$.
     Since any two norms on a finite-dimensional space
      are equivalent to each other, we conclude the assertion.    
       
 \end{proof}

 Let us denote $(\vec u_h^\tau, \vech u_h^\tau, p_h^\tau)$  
  the solution of the reduced method \eqref{rhdg}
   with the stabilization parameter $\tau$. 
   By using Lemma \ref{keylemma}, it can be proved that
     $|(\vec u_h^\tau, \vech u_h^\tau)|_{\rm j}$ converges to zero
     as $\tau \to \infty$ with rate $O(\tau^{-1})$.

 \begin{theorem} 
     \label{thm:tauinf-jump}
     If $\tau$ is sufficiently large, we have
     \begin{align} \label{tauinf-jump-rate}
     & |(\vec u_h^\tau, \vech u_h^\tau)|_{\rm j} 
     \le C\tau^{-1} \|\vec f\|.      
     \end{align}
 \end{theorem}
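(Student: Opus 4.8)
The plan is to exploit the energy-coercivity of $a_h$ together with the key Lemma~\ref{keylemma}, which bounds the quotient-space distance by the jump seminorm. The strategy is to test the discrete problem \eqref{rhdg} with the solution itself to extract control on $|(\vec u_h^\tau, \vech u_h^\tau)|_{\rm j}$, and then to obtain the factor $\tau^{-1}$ by carefully tracking where $\tau$ enters. First I would set $(\vec v_h, \vech v_h) = (\vec u_h^\tau, \vech u_h^\tau)$ and $q_h = p_h^\tau$ in \eqref{rhdg1}--\eqref{rhdg2}; after subtracting, the pressure term $b_h$ cancels against \eqref{rhdg2}, leaving
\[
 a_h(\vec u_h^\tau, \vech u_h^\tau; \vec u_h^\tau, \vech u_h^\tau) = (\vec f, \vec u_h^\tau)_\Omega.
\]
By the coercivity estimate \eqref{coer}, the left side dominates $C\enorm{(\vec u_h^\tau, \vech u_h^\tau)}^2$, which in particular controls $\tau\,|(\vec u_h^\tau, \vech u_h^\tau)|_{\rm j}^2$ through the definition of $|\cdot|_{\rm j,\tau}^2 = \tau|\cdot|_{\rm j}^2$. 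The right side is bounded by $\|\vec f\|\,\|\vec u_h^\tau\|$, and $\|\vec u_h^\tau\|$ is controlled by the energy norm via a Poincaré-type inequality, giving a uniform bound $\enorm{(\vec u_h^\tau, \vech u_h^\tau)} \le C\|\vec f\|$ independent of $\tau$.

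The subtle point is that coercivity \eqref{coer} alone only yields $|(\vec u_h^\tau, \vech u_h^\tau)|_{\rm j} \le C\tau^{-1/2}\|\vec f\|$, which is half an order short of the claimed $O(\tau^{-1})$. To gain the extra power of $\tau$, the idea is to compare $(\vec u_h^\tau, \vech u_h^\tau)$ against its projection onto the Gauss-Legendre subspace $\vec G(\tV)$. Using Lemma~\ref{keylemma}, I would choose $\tvec w_h \in \tV$ realizing the infimum so that
\[
 \enorm{(\vec u_h^\tau - \tvec w_h, \vech u_h^\tau - \Proj_k \tvec w_h)}_h \le C\,|(\vec u_h^\tau, \vech u_h^\tau)|_{\rm j}.
\]
Since $(\tvec w_h, \Proj_k \tvec w_h) \in \vec G(\tV)$ has zero jump seminorm, testing \eqref{rhdg1} with the difference $(\vec u_h^\tau - \tvec w_h, \vech u_h^\tau - \Proj_k \tvec w_h)$ isolates the stabilization term: the stabilization contribution becomes $\langle \tau h_e^{-1}\Proj_k(\vech u_h^\tau - \vec u_h^\tau), \Proj_k((\vech u_h^\tau - \Proj_k\tvec w_h) - (\vec u_h^\tau - \tvec w_h))\rangle_{\dT_h}$, which equals $|(\vec u_h^\tau, \vech u_h^\tau)|_{\rm j,\tau}^2 = \tau\,|(\vec u_h^\tau, \vech u_h^\tau)|_{\rm j}^2$ because the subtracted Gauss-Legendre part is annihilated by $\Proj_k$-matching. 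Rearranging, $\tau\,|(\vec u_h^\tau, \vech u_h^\tau)|_{\rm j}^2$ is bounded by the remaining terms of $a_h$ plus the right-hand side, all of which are controlled by the product of $\enorm{(\vec u_h^\tau - \tvec w_h, \ldots)}_h$ (hence by $|(\vec u_h^\tau, \vech u_h^\tau)|_{\rm j}$ via Lemma~\ref{keylemma}) and the uniform bound $\enorm{(\vec u_h^\tau, \vech u_h^\tau)} \le C\|\vec f\|$.

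The main obstacle, and the step requiring the most care, is verifying that when testing with the projected difference, every term of $a_h$ except the stabilization either vanishes or is bounded by $|(\vec u_h^\tau, \vech u_h^\tau)|_{\rm j}$ rather than by the full energy norm. The Galerkin-type identity from \eqref{rhdg1} must be combined with the Gauss-Legendre weak form \eqref{gl}, whose consistency ensures the $(\nabla\cdot,\nabla\cdot)_{\T_h}$ and $b_h$ contributions pair against $\tvec w_h$ in a way that leaves only jump-controlled residuals. The delicate bookkeeping is that the consistency terms $\ang{\ngrad \vec u_h^\tau, \cdot}_{\dT_h}$ survive with one factor being the projected difference; these are bounded using the trace inequality and \eqref{ineq1} by $|\vec u_h^\tau|_{1,h}$ times the quotient norm, and $|\vec u_h^\tau|_{1,h}\le C\|\vec f\|$ from the uniform energy bound. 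Collecting,
\[
 \tau\,|(\vec u_h^\tau, \vech u_h^\tau)|_{\rm j}^2 \le C\,\|\vec f\|\;|(\vec u_h^\tau, \vech u_h^\tau)|_{\rm j},
\]
and dividing by $|(\vec u_h^\tau, \vech u_h^\tau)|_{\rm j}$ yields the desired $O(\tau^{-1})$ rate. The novelty over \cite[Theorem 3]{BCJ2012} lies precisely in this quotient-norm mechanism supplied by Lemma~\ref{keylemma}, which replaces their limit argument with a quantitative one-shot estimate.
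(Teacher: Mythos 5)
Your proposal is correct and follows essentially the same route as the paper: where the paper tests \eqref{rhdg1} with $(\tvec v_h,\Proj_{k}\tvec v_h)$ and with $(\vec u_h^\tau,\vech u_h^\tau)$ and subtracts, you test once with the difference $(\vec u_h^\tau-\tvec w_h,\vech u_h^\tau-\Proj_{k}\tvec w_h)$ --- identical by bilinearity --- isolating the stabilization term $|(\vec u_h^\tau,\vech u_h^\tau)|_{\rm j,\tau}^2$, bounding the remaining terms by the uniform bound $\enorm{(\vec u_h^\tau,\vech u_h^\tau)}_h+\|p_h^\tau\|\le C\|\vec f\|$ times the quotient distance, and invoking Lemma~\ref{keylemma} to convert that distance into the jump seminorm, exactly as the paper does. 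The only cosmetic discrepancies are that your collected inequality drops the quadratic term $C|(\vec u_h^\tau,\vech u_h^\tau)|_{\rm j}^2$, which the paper retains and absorbs by assuming $\tau>C_*$ (precisely what your hypothesis ``$\tau$ sufficiently large'' permits), and that your passing appeal to the Gauss--Legendre weak form \eqref{gl} is unnecessary, since the estimates you describe never use it.
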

 
 \begin{proof}
 Let $\tvec v_h \in \tV$ be arbitrary.
Since $\Proj_{k}(\tvec v_h - \Proj_{k} \tvec v_h)$ 
vanishes on $\Gamma_h$,  we have
\begin{equation*}
\begin{aligned}
|(\vec u_h^\tau, \vech u_h^\tau)|_{\rm j, \tau}
&=  |(\vec u_h^\tau - \tvec v_h,
\vech u_h^\tau - \Proj_{k}\tvec v_h)|_{\rm j, \tau} \\
& \le \enorm{(\vec u_h^\tau - \tvec v_h,
    \vech u_h^\tau - \Proj_{k}\tvec v_h)}_{\tau}.
\end{aligned}
\end{equation*}
Therefore,
\begin{equation}
\label{tauinf-rate1}
\begin{aligned}
|(\vec u_h^\tau, \vech u_h^\tau)|_{\rm j, \tau}
&\le  \inf_{\tvec w_h \in \tV}\enorm{(\vec u_h^\tau - \tvec w_h,
    \vech u_h^\tau - \Proj_{k}\tvec w_h)}_{\tau}.
\end{aligned}
\end{equation}
 Taking $(\vec v_h,  \vech v_h) = (\tvec v_h, \Proj_{k}\tvec v_h)$ in \eqref{rhdg}, we have
 \begin{align}
 (\nabla \vec u_h^\tau, \nabla \tvec v_h)_{\T_h} 
 + \ang{\ngrad \tvec v_h, \vech u_h^\tau - \vec u_h^\tau}_{\dT_h}
  -  (\dive \tvec v_h, p_h^\tau)_{\T_h}
  = (\vec f, \tvec v_h)_\Omega.
 \end{align}
 Subtracting this from \eqref{rhdg1} by taking
  $(\vec v_h, \vech v_h) = (\vec u^\tau_h, \vech u^\tau_h)$ gives
 \begin{equation}
 \label{tauinf-rate2}
  \begin{aligned}
  |(\vec u_h^\tau, \vech u_h^\tau)|_{\rm j, \tau}^2
  =& (\vec f, \vec u_h^\tau - \tvec v_h)_\Omega 
      -(\nabla \vec u_h^\tau, \nabla(\vec u_h^\tau - \tvec v_h))_{\T_h}\\
  &   -\ang{\ngrad \vec u_h^\tau, \vech u_h^\tau - \vec u_h^\tau}_{\dT_h}
     -\ang{\ngrad (\vec u_h^\tau-\tvec v_h), \vech u_h^\tau - \vec u_h^\tau}_{\dT_h}\\
  & + (\dive (\vec u_h^\tau - \tvec v_h), p_h^\tau)_{\T_h}
   +  \ang{\vech u_h^\tau - \vec u_h^\tau, p_h^\tau\vec n}_{\dT_h}.
  \end{aligned}
  \end{equation}
%
\newcommand{\tmp}{\vec u_h^\tau - \tvec w_h,  \vech u_h^\tau - \Proj_{k}\tvec w_h}
Here we note that the energy norm is stronger than the $L^2$ norm(see \cite{OikawaToAppear} for the proof): 
     \begin{equation} \label{tauinf-rate3}
     \|\vec z_h\|
     \le C\enorm{(\vec z_h, \vech z_h)}_h
     \qquad \forall (\vec z_h, \vech z_h) \in \V \times \hV,
     \end{equation}
     and that by the standard argument, we see that $\vec u_h^\tau$ and $p_h^\tau$ are uniformly bounded in
     the energy and $L^2$ norms, respectively, i.e.
     \[
       \enorm{\vec u_h^\tau}_{h} + \|p_h^\tau\| \le C\|\vec f\|,
     \] 
     where $C$ is independent of $\tau$.
 The terms on the right-hand side of \eqref{tauinf-rate2} are
 bounded as follows:
 \newcommand{\tmpdiff}{(\vec u_h^\tau - \tvec v_h, \vech u_h^\tau -\Proj_{k}\tvec v_h)}
 \begin{align*}
   |(\vec f, \vec u_h^\tau - \tvec v_h)_\Omega|
  & \le \|\vec f\| \|\vec u_h^\tau - \tvec v_h\|\\
 &  \le C\|\vec f\| \enorm{\tmpdiff}_{h}, 
   \\
   |(\nabla \vec u_h^\tau, \nabla(\vec u_h^\tau - \tvec v_h))_{\T_h}|
 &  \le \|\nabla \vec u_h^\tau\| \|\nabla(\vec u_h^\tau - \tvec v_h)\| \\
 &  \le C\|\vec f\| \enorm{\tmpdiff}_{h},\\
 |\ang{\ngrad \vec u_h^\tau, \vech u_h^\tau - \vec u_h^\tau}_{\dT_h}| 
 & \le  C\|\nabla \vec u_h^\tau\| |(\vec u_h^\tau, \vech u_h^\tau)|_{\rm j} \\
 &\le C\|\vec f\| |(\vec u_h^\tau, \vech u_h^\tau)|_{\rm j},\\
 |\ang{\ngrad (\vec u_h^\tau-\tvec v_h), \vech u_h^\tau - \vec u_h^\tau}_{\dT_h}|
& \le C\|\nabla(\vec u_h^\tau-\tvec v_h)\| 
  |(\vec u_h^\tau, \vech u_h^\tau)|_{\rm j} \\
  &\le C\enorm{\tmpdiff}_h 
  |(\vec u_h^\tau, \vech u_h^\tau)|_{\rm j} \\
|(\dive (\vec u_h^\tau - \tvec v_h), p_h^\tau)_{\T_h}| 
&\le \|\nabla(\vec u_h^\tau - \tvec v_h)\| \|p_h^\tau\| \\
&\le C\enorm{\tmpdiff}_h\|\vec f\|
\\
|\ang{\vech u_h^\tau - \vec u_h^\tau, p_h^\tau\vec n}_{\dT_h}|
&\le C|(\vec u_h^\tau, \vech u_h^\tau)|_{\rm j} 
 \|p_h^\tau\| \\
 & \le C|(\vec u_h^\tau, \vech u_h^\tau)|_{\rm j}\|\vec f\|.
 \end{align*}
 From the above estimates, we obtain, by using Lemma \ref{keylemma},
 \begin{align*}
 |(\vec u_h^\tau, \vech u_h^\tau)|_{\rm j, \tau}^2
 &\le C(\|\vec f\| + |(\vec u_h^\tau, \vech u_h^\tau)|_{\rm j})
  \cdot \inf_{\tvec w_h \in \tV}\enorm{(\vec u_h^\tau - \tvec w_h, \vech u_h^\tau -\Proj_{k}\tvec w_h)}_h \\
&  \le C_*(\|\vec f\| + |(\vec u_h^\tau, \vech u_h^\tau)|_{\rm j})
   |(\vec u_h^\tau, \vech u_h^\tau)|_{\rm j}.
 \end{align*}
 Since we can assume $\tau > C_*$,  it finally follows that
 \[
 |(\vec u_h^\tau, \vech u_h^\tau)|_{\rm j}
 \le \frac{C}{\tau - C_*} \|\vec f\|
 \le C\tau^{-1} \|\vec f\|.
 \] 
   The proof is complete.  
   \end{proof}

In the following theorem, we prove that $(\vec u_h^\tau, p_h^\tau)$ 
converges to $(\vec u_h^*,  p_h^*)$ with the same rate  as $|(\vec u_h^\tau, \vech u_h^\tau)|_{\rm j}$, namely $O(\tau^{-1})$. 
   \begin{theorem} \label{thm:tauinf-conv}
    If $\tau$ is sufficiently large, we have
       \begin{equation}
       |\vec u_h^* - \vec u_h^\tau|_{1,h}
       + \| p_h^* - p_h^\tau\| \le C\tau^{-1} \|\vec f\|.
       \end{equation}
    \end{theorem}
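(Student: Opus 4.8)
The plan is to reduce the comparison to a standard saddle-point error estimate posed entirely on the Gauss--Legendre pair $\tV \times Q_h^{k}$. The obstruction to doing this directly is that the reduced-HDG velocity $\vec u_h^\tau$ does not belong to the nonconforming space $\tV$; its failure to do so is measured precisely by $\jumpuh$. My first step is therefore to replace $\vec u_h^\tau$ by a nearby conforming surrogate. Applying Lemma \ref{keylemma} with $(\vec v_h, \vech v_h) = (\vec u_h^\tau, \vech u_h^\tau)$ and invoking Theorem \ref{thm:tauinf-jump}, I obtain a function $\tvec z_h \in \tV$ with
\[
 \enorm{(\vec u_h^\tau - \tvec z_h, \vech u_h^\tau - \Proj_{k} \tvec z_h)}_h \le C\jumpuh \le C\tau^{-1}\|\vec f\|,
\]
so in particular $|\vec u_h^\tau - \tvec z_h|_{1,h} \le C\tau^{-1}\|\vec f\|$. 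Setting $\vec e_h = \tvec z_h - \vec u_h^* \in \tV$ and $\epsilon_h = p_h^\tau - p_h^* \in Q_h^{k}$, the triangle inequality reduces the theorem to bounding $|\vec e_h|_{1,h} + \|\epsilon_h\|$ by $C\tau^{-1}\|\vec f\|$, and now both $\tvec z_h$ and $\vec u_h^*$ lie in $\tV$.

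Next I would derive the error equations for $(\vec e_h, \epsilon_h)$. Testing \eqref{rhdg1} with the admissible pair $(\tvec v_h, \Proj_{k} \tvec v_h)$ for $\tvec v_h \in \tV$, several terms collapse: the stabilization term vanishes because $\Proj_{k}(\Proj_{k}\tvec v_h - \tvec v_h) = 0$; the term $\ang{\ngrad \vec u_h^\tau, \Proj_{k}\tvec v_h - \tvec v_h}_{\dT_h}$ vanishes because $\ngrad \vec u_h^\tau$ is edge-wise of degree $k$ and is hence $L^2(e)$-orthogonal to $(\I - \Proj_{k})\tvec v_h$; and the facet pressure term in $b_h$ vanishes for the same reason. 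What remains is
\[
 (\nabla \vec u_h^\tau, \nabla \tvec v_h)_{\T_h} + \ang{\ngrad \tvec v_h, \Proj_{k}(\vech u_h^\tau - \vec u_h^\tau)}_{\dT_h} - (\dive \tvec v_h, p_h^\tau)_{\T_h} = (\vec f, \tvec v_h)_\Omega.
\]
Subtracting \eqref{gl1} and writing $\vec u_h^\tau - \vec u_h^* = \vec e_h + (\vec u_h^\tau - \tvec z_h)$ yields
\[
 (\nabla \vec e_h, \nabla \tvec v_h)_{\T_h} - (\dive \tvec v_h, \epsilon_h)_{\T_h} = \langle R_1, \tvec v_h\rangle,
\]
where $\langle R_1, \tvec v_h\rangle = -\ang{\ngrad \tvec v_h, \Proj_{k}(\vech u_h^\tau - \vec u_h^\tau)}_{\dT_h} - (\nabla(\vec u_h^\tau - \tvec z_h), \nabla \tvec v_h)_{\T_h}$. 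An analogous treatment of \eqref{rhdg2} and \eqref{gl2}---using that $\ang{\vech u_h^\tau - \vec u_h^\tau, q_h\vec n}_{\dT_h} = \ang{\Proj_{k}(\vech u_h^\tau - \vec u_h^\tau), q_h\vec n}_{\dT_h}$ since $q_h\vec n$ is edge-wise of degree $k$---gives $(\dive \vec e_h, q_h)_{\T_h} = \langle R_2, q_h\rangle$, with $\langle R_2, q_h\rangle$ collecting the two contributions $-(\dive(\vec u_h^\tau - \tvec z_h), q_h)_{\T_h}$ and $-\ang{\Proj_{k}(\vech u_h^\tau - \vec u_h^\tau), q_h\vec n}_{\dT_h}$.

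Finally, I would estimate the two residuals. Using the trace and inverse inequalities for the facet terms and the Schwarz inequality for the volume terms, together with $|\vec u_h^\tau - \tvec z_h|_{1,h} \le C\tau^{-1}\|\vec f\|$ and $\jumpuh \le C\tau^{-1}\|\vec f\|$, one finds $\|R_1\| \le C\tau^{-1}\|\vec f\|$ in the dual of $|\cdot|_{1,h}$ and $\|R_2\| \le C\tau^{-1}\|\vec f\|$ in the dual of $\|\cdot\|$. Since $(\nabla\cdot,\nabla\cdot)_{\T_h}$ is coercive on $\tV$ in the $|\cdot|_{1,h}$ seminorm and the form $-(\dive\cdot,\cdot)_{\T_h}$ satisfies the Gauss--Legendre discrete inf-sup condition, the standard saddle-point argument---test the first equation with $\tvec v_h = \vec e_h$ and the second with $q_h = \epsilon_h$ to control $|\vec e_h|_{1,h}$, then recover $\|\epsilon_h\|$ from the inf-sup condition---yields $|\vec e_h|_{1,h} + \|\epsilon_h\| \le C(\|R_1\| + \|R_2\|) \le C\tau^{-1}\|\vec f\|$, and the triangle inequality completes the proof. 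The one genuinely new point, and the main obstacle, is the first step: because $\vec u_h^\tau \notin \tV$, a naive subtraction of the two schemes does not live in a space on which the Gauss--Legendre inf-sup is available, and it is Lemma \ref{keylemma} that supplies the conforming surrogate $\tvec z_h$ making the reduction possible.
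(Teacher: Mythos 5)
Your proposal is correct, and it rests on exactly the same pillars as the paper's own proof: testing the reduced scheme with Gauss--Legendre pairs $(\tvec v_h, \Proj_{k}\tvec v_h)$, Lemma \ref{keylemma} to control the distance from $(\vec u_h^\tau,\vech u_h^\tau)$ to $\vec G(\tV)$ by $\jumpuh$, Theorem \ref{thm:tauinf-jump} for the $O(\tau^{-1})$ decay of $\jumpuh$, and the Gauss--Legendre inf-sup condition for the pressure. The organization, however, is genuinely different. The paper never fixes a conforming surrogate: it works directly with the nonconforming error $\vec\xi_h^\tau = \vec u_h^* - \vec u_h^\tau$, splits $|\vec\xi_h^\tau|_{1,h}^2$ into the two terms $I=(\nabla(\vec u_h^*-\tvec w_h),\nabla\vec\xi_h^\tau)_{\T_h}$ and $II=(\nabla(\tvec w_h-\vec u_h^\tau),\nabla\vec\xi_h^\tau)_{\T_h}$ for an \emph{arbitrary} $\tvec w_h\in\tV$, uses the error equations only on the admissible test function $\vec u_h^*-\tvec w_h\in\tV$, and takes the infimum over $\tvec w_h$ at the end before absorbing with Young's inequality. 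You instead ``conformize'' first: you fix $\tvec z_h\in\tV$ (near-)attaining the infimum in Lemma \ref{keylemma}, so that $(\vec e_h,\epsilon_h)=(\tvec z_h-\vec u_h^*,\,p_h^\tau-p_h^*)$ lies in $\tV\times Q_h^{k}$ and solves a Gauss--Legendre Stokes system perturbed by residuals $R_1,R_2$ of size $O(\tau^{-1})\|\vec f\|$, to which standard saddle-point stability applies, and a triangle inequality finishes. (Your extra $\Proj_{k}$ insertions are legitimate since $\ngrad\tvec v_h$, $\ngrad\vec u_h^\tau$ and $q_h\vec n$ are all edgewise polynomials of degree $k$, so your error equations agree with the paper's \eqref{rhdg-gl}.) What your route buys is modularity: the stability of the Gauss--Legendre discretization is invoked as a black box and the nonconformity is quarantined in $\tvec z_h-\vec u_h^\tau$ and the residuals, which makes the bookkeeping lighter than the paper's terms $I_1$--$I_4$, $II$. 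What the paper's route buys is self-containment: it avoids naming a minimizer and re-derives the stability estimate by hand, which is why its final steps (inf-sup bound for $\delta_h^\tau$, then Young's inequality) are literally the same estimates you apply to $(\vec e_h,\epsilon_h)$, just executed on the nonconforming error with the infimum carried along.
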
  
    \begin{proof}
    Choosing $\vec v_h = \tvec v_h \in \tvec V_h^k$ and
    $\vech v_h = \Proj_{k}\tvec v_h$
    in \eqref{rhdg}, we have 
    \begin{subequations} \label{rhdg-gl}
        \begin{align}
        (\nabla \vec u_h^\tau, \nabla \tvec v_h)_{\T_h}
        +\ang{\ngrad\tvec v_h, \vech u_h^\tau - \vec u_h^\tau}_{\dT_h}
        -(\dive \tvec v_h, p_h^\tau)_{\T_h}
        &  = (\vec f, \tvec v_h)_\Omega 
        & \forall& \tvec v_h \in \tV,
        \\
        (\dive \vec u_h^\tau, q_h)_{\T_h}
        +\ang{\vech u_h^\tau - \vec u_h^\tau, q_h \vec n}_{\dT_h}
        &= 0 & \forall& q_h \in Q_h^{k}. 
        \end{align}
    \end{subequations}   
    Let us denote $\vec\xi_h^\tau = \vec u_h^* - \vec u_h^\tau$,
    $\vech\xi_h^\tau = \Proj_{k} \vec u_h^* - \vech u_h^\tau$
     and $\delta_h^\tau = p_h^* - p_h^\tau$.
     Subtracting \eqref{rhdg-gl} from \eqref{gl} leads to
     \begin{subequations}
     
        \begin{align}
        (\nabla \vec \xi_h^\tau, \nabla \tvec v_h)_{\T_h}
        -\ang{\ngrad\tvec v_h, \vech u_h^\tau - \vec u_h^\tau}_{\dT_h}
        -(\dive \tvec v_h, \delta_h^\tau)_{\T_h}
        &  = 0
        & \forall& \tvec v_h \in \tV,
        \label{eq-diff-a}
        \\
        (\dive \vec \xi_h^\tau, q_h)_{\T_h}
        +\ang{\vech u_h^\tau - \vec u_h^\tau, q_h \vec n}_{\dT_h}
        &= 0 & \forall& q_h \in Q_h^{k}.
        \label{eq-diff-b} 
        \end{align}  
     \end{subequations}  
    Let $\tvec w_h \in \tV$ be arbitrary, then we have
    \begin{align*}
     (\nabla \vec\xi_h^\tau,  \nabla \vec\xi_h^\tau)_{\T_h}
      & = (\nabla(\vec u_h^* - \tvec w_h),  \nabla \vec\xi_h^\tau)_{\T_h}
      + (\nabla(\tvec w_h - \vec u_h^\tau),  \nabla \vec\xi_h^\tau)_{\T_h}\\
      &=: I + II.
    \end{align*}
    By \eqref{eq-diff-a} with $\tvec v_h = \vec u_h^* - \tvec w_h$, we get
    \begin{align*}
     I &= \ang{\ngrad (\vec u_h^* - \tvec w_h), \vech u_h^\tau - \vec u_h^\tau}_{\dT_h}
          + (\dive(\vec u_h^* - \tvec w_h), \delta_h^\tau)_{\T_h} \\
       &= \ang{\ngrad \vec \xi_h^\tau, \vech u_h^\tau - \vec u_h^\tau}_{\dT_h}
          +\ang{\ngrad(\vec u_h^\tau - \tvec w_h),  \vech u_h^\tau - \vec u_h^\tau}_{\dT_h}\\
       &\quad + (\dive\vec \xi_h^\tau, \delta_h^\tau)_{\T_h}
          +  (\dive(\vec u_h^\tau - \tvec w_h), \delta_h^\tau)_{\T_h} \\
       & =: I_1 + I_2 + I_3 + I_4.   
    \end{align*}
    These terms can be bounded as follows:
    \begin{align*}
    |I_1| &\le C|\vec \xi_h^\tau|_{1,h} \jumpuh, \\
    |I_2| & \le C|\vec u_h^\tau - \tvec w_h |_{1,h} \jumpuh,\\
    |I_3| & = |-\ang{\vech u_h^\tau - \vec u^\tau_h, \delta_h^\tau \vec n}_{\dT_h}| 
            \qquad \text{ (by \eqref{eq-diff-b})} \\
          & \le C \jumpuh \|\delta_h^\tau\|,\\
    |I_4| & \le C |\vec u_h^\tau - \tvec w_h|_{1,h} \|\delta_h^\tau\|.
    \end{align*}
    Therefore, we have
    \begin{align*}
    |I| &\le   C(|\vec\xi_h^\tau|_{1,h}+\|\delta_h^\tau\|)\jumpuh
          +C(\jumpuh+\|\delta_h^\tau\|)|\vec u_h^\tau - \tvec w_h|_{1,h}.
    \end{align*}
    Taking the infimum with respect to $\tvec w_h$ and using Lemma \ref{keylemma}, we have
     \begin{equation} \label{esti-tau-xi-I}
    \begin{aligned}
     |I|
        &\le C 
     \left( |(\vec u_h^\tau, \vech u_h^\tau)|_{\rm j} 
     + |\vec \xi_h^\tau|_{1,h} + \|\delta_h^\tau\|
     \right) 
     |(\vec u_h^\tau, \vech u_h^\tau)|_{\rm j}.
    \end{aligned}
         \end{equation}
    On the other hand, by the inf-sup condition for the Gauss-Legendre element,
    we have
    \begin{equation} \label{esti-tau-delta}
    \begin{aligned}
    \tilde \beta \| \delta_h^\tau \|
     &\le \sup_{\tvec v_h \in  \tV} 
     \frac{(\dive\tvec v_h,  \delta_h^\tau)_{\T_h}}
     {|\tvec v_h|_{1,h}} \\
     &=  \sup_{\tvec v_h \in  \tV} 
     \frac{ (\nabla \vec \xi_h^\tau, \nabla \tvec v_h)_{\T_h}
          - \ang{\ngrad\tvec v_h, \vech u_h^\tau - \vec u_h^\tau}_{\dT_h}
         }{|\tvec v_h|_{1,h}} \\
     &\le
      |\vec \xi_h^\tau|_{1,h}
       + C|(\vec u_h^\tau, \vech u_h^\tau)|_{\rm j}.
    \end{aligned}
         \end{equation}
    Combining \eqref{esti-tau-xi-I} with \eqref{esti-tau-delta}  gives us  
           \begin{equation} \label{esti-I}
           \begin{aligned}
           |I| 
           &\le C 
           \left(\jumpuh+|\vec \xi_h^\tau|_{1,h}\right)\jumpuh.
           \end{aligned}
           \end{equation}
    The term $II$ is estimated as follows  by using Lemma \ref{keylemma}:
    \begin{equation}\label{esti-II}
    \begin{aligned}
     |II| &\le
      \inf_{\tvec w_h \in \tV}\enorm{(\vec u_h^\tau - \tvec w_h, \vech u_h^\tau -\Proj_{k}\tvec w_h)}_h
     \cdot  |\vec \xi_h^\tau|_{1,h} \\
     &\le C\jumpuh |\vec \xi_h^\tau|_{1,h}. 
    \end{aligned}
    \end{equation}
    From \eqref{esti-I} and \eqref{esti-II}, it follows that
    \[
     |\vec \xi_h^\tau|_{1,h}^2 \le 
     C 
           \left(\jumpuh+|\vec \xi_h^\tau|_{1,h}\right)\jumpuh.
    \]
    By Young's inequality, we get
    $
     |\vec\xi_h^\tau|_{1,h} \le C|(\vec u_h^\tau, \vech u_h^\tau)|_{\rm j}.
    $
    Using \eqref{esti-tau-delta} again, we have
    $ 
       \|\delta_h^\tau\| \le C|(\vec u_h^\tau, \vech u_h^\tau)|_{\rm j}.
    $
    By Theorem \ref{thm:tauinf-jump}, we obtain the assertion.  
\end{proof}

\section{Numerical results} 
As a test problem, we consider the case of $\Omega = (0,1)^2$  and
\begin{align*}
 \vec f(x,y) = (4\pi^2\sin(2\pi y), 4\pi^2\sin(2\pi x) (-1+4\cos(2\pi y)))^T.
\end{align*}
The source term is chosen so that the exact solution $(\vec u,p)$ is  
\begin{align*}
 \vec u(x,y) &= (2\sin^2(\pi x)\sin(\pi y), -2\sin(\pi x)\sin^2(\pi y))^T, \\
p(x,y) &= 4\pi \sin(2\pi x)\sin(2\pi y).
\end{align*}

 We carry out numerical computations 
  to examine the convergence rates of the reduced method.
  We employ unstructured triangular meshes and the $P_{k+1}$--$P_{k}$/$P_{k}$ approximations 
  for $k=0,1,2$.
 The convergence history is shown at Table \ref{tb:chist}, and the convergence diagrams are 
  displayed in Figure \ref{fig:cdiagrams}.
  From these results, we observe that
  the convergence orders are optimal in all the cases,
   which agrees with Theorems \ref{thm:enorm} and \ref{thm:l2norm}.

 \begin{table}[h!]
 \caption{Convergence history of the reduced method.}
 \center
 \bgroup
 \renewcommand{\arraystretch}{1.2}
 \begin{tabular}{cccccccc} \hline \hline
 Degree & Mesh size & 
      \multicolumn{2}{c}{$\|\vec u - \vec u_h\|$}  & \multicolumn{2}{c}{$\|\nabla (\vec u - \vec u_h)\|$} & \multicolumn{2}{c}{$\|p - p_h\|$}   \\
$k$ & $h$   & Error & Order & Error & Order & Error & Order \\ \hline
0   & 0.2634  & 1.789E-01 &  -- & 3.672E+00 & -- & 2.307E+00 & --  \\
    & 0.1414   & 4.938E-02 & 2.07 & 1.901E+00 & 1.06  & 1.249E+00 & 0.99  \\
    & 0.0701  & 1.200E-02 & 2.01 & 9.340E-01 & 1.01  & 6.054E-01 & 1.03  \\
    & 0.0373  & 2.907E-03 & 2.25 & 4.596E-01 & 1.13  & 2.925E-01 & 1.15  \\ \hline
1   & 0.2634  & 1.038E-02 &  -- & 8.486E-01 & -- & 4.520E-01 & --  \\
    & 0.1414   & 1.639E-03 & 2.97 & 2.290E-01 & 2.11  & 1.259E-01 & 2.06  \\
    & 0.0701   & 1.815E-04 & 3.13 & 5.003E-02 & 2.17  & 2.768E-02 & 2.16  \\
    & 0.0373  & 2.295E-05 & 3.28 & 1.254E-02 & 2.19  & 6.859E-03 & 2.20  \\ \hline
2  & 0.2634   & 1.338E-03 & --  & 1.439E-01 & --  & 3.105E-02 & -- \\
    & 0.1415 & 8.656E-05 & 4.40 & 1.900E-02 & 3.26  & 3.920E-03 & 3.33  \\
    & 0.0701  & 5.089E-06 & 4.03 & 2.298E-03 & 3.01  & 4.310E-04 & 3.14  \\
    & 0.0373  & 3.168E-07 & 4.40 & 2.858E-04 & 3.31  & 4.956E-05 & 3.43  \\ 
\hline \hline
\end{tabular}
\egroup
\label{tb:chist}
\end{table}

\begin{figure}[b]
\center
\begin{minipage}{150pt}
\includegraphics[width=150pt]{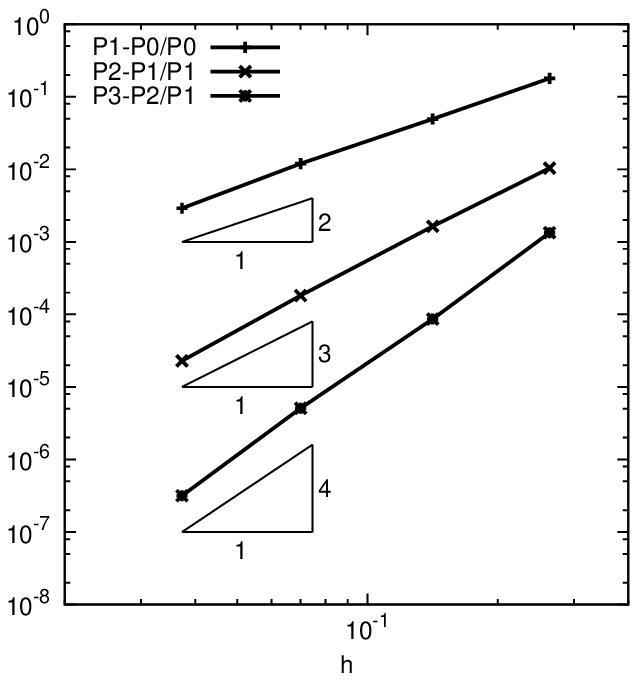}
\begin{center}(a) $L^2$-error of velocity \end{center}
\end{minipage}
\begin{minipage}{150pt}
\includegraphics[width=150pt]{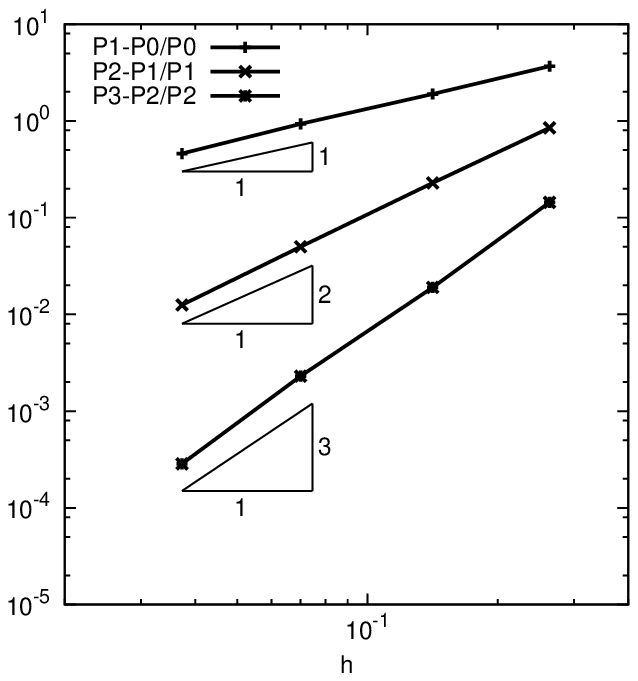}
\begin{center}(b) $H^1$-error of velocity \end{center}
\end{minipage}
\bigskip
\par 
\begin{minipage}{150pt}
\includegraphics[width=150pt]{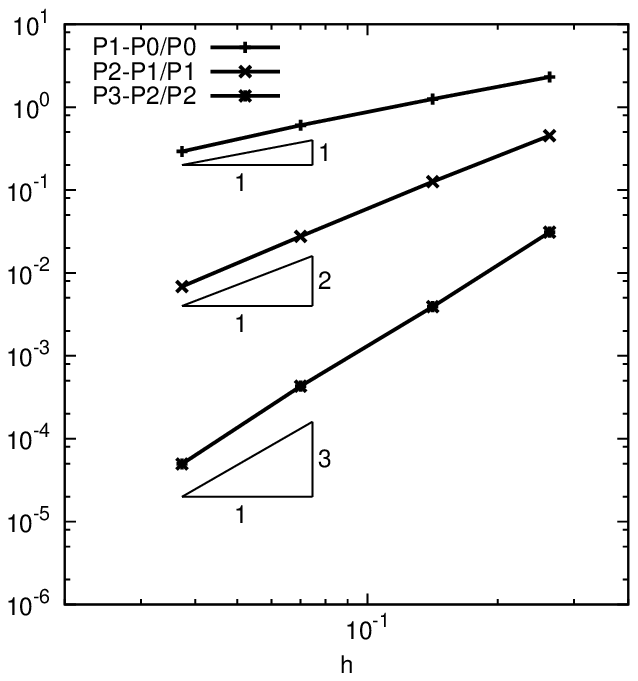}
\begin{center}(c) $L^2$-error of pressure \end{center}
\end{minipage}
\caption{Convergence diagrams of the reduced method.}
\label{fig:cdiagrams}

\end{figure}

  \bibliographystyle{abbrv}
  \bibliography{ref}  
\end{document}